\documentclass[final]{siamltex}

\usepackage{color}
\usepackage{CJK}

\usepackage{amsmath, amsfonts, amssymb,latexsym, mathrsfs, tikz, booktabs}

\usepackage{graphicx}
\usepackage{geometry}
\usepackage{hyperref}
\usepackage{color}
\usepackage{bm}
\usepackage{multicol}
\usepackage{multirow}

\usepackage[title]{appendix}

\textwidth 15.5cm
\textheight 22.5cm
\oddsidemargin 0pt
\evensidemargin 0pt
\topmargin 0cm

\setlength{\marginparwidth}{0.75in}

\newcommand{\sfrac}[2]{\mathchoice
  {\kern0em\raise.5ex\hbox{\the\scriptfont0 #1}\kern-.15em/
   \kern-.15em\lower.25ex\hbox{\the\scriptfont0 #2}}
  {\kern0em\raise.5ex\hbox{\the\scriptfont0 #1}\kern-.15em/
   \kern-.15em\lower.25ex\hbox{\the\scriptfont0 #2}}
  {\kern0em\raise.5ex\hbox{\the\scriptscriptfont0 #1}\kern-.2em/
   \kern-.15em\lower.25ex\hbox{\the\scriptscriptfont0 #2}}
  {#1\!/#2}}

\def\gb {\bm{g}}

\def\ub {\bm{u}}
\def\vb {\bm{v}}

\def\hb {\bm{h}}
\def\nb {\bm{n}}

\def\div{\mbox{div}}

\def\det{\mbox{det}}

\def\epsilonb {\boldsymbol{\epsilon}}

\begin{document}

\title{Schur complement based preconditioners for twofold and block tridiagonal saddle point problems}
\author{Mingchao Cai \thanks{Corresponding author, Department of Mathematics, Morgan State University, Baltimore, MD 21251, USA, E-mail: cmchao2005@gmail.com}
\and
Guoliang Ju \thanks{Solid State R \& D Department, Shenzhen Tenfong CO. LTD., Shenzhen, 518055, China,
E-mail: jugl@tenfong.cn}
\and
Jingzhi LI \thanks{Department of Mathematics, Southern University of Science and Technology, Shenzhen, Guangdong 518055, China, E-mail: li.jz@sustech.edu.cn}
}

\date{}          
\maketitle

\begin{abstract}
In this paper, we consider using Schur complements to design preconditioners for twofold and block tridiagonal saddle point problems. One type of the preconditioners are based on the
nested (or recursive) Schur complement, the other is based on an additive type Schur complement after permuting the original saddle point systems. We analyze different preconditioners incorporating the exact Schur complements. We show that some of them will lead to positively stable preconditioned systems if proper signs are selected in front of the Schur complements.
These positive-stable preconditioners outperform other preconditioners if the Schur complements are further approximated inexactly. Numerical experiments for a 3-field formulation of the Biot model are provided to verify our predictions.
\end{abstract}

{\bf Keywords} Schur complement; block tridiagonal systems; positively stable preconditioners; Routh–Hurwitz stability criterion.

\section{Introduction}
Many application problems will lead to twofold and/or block tridiagonal saddle point linear systems. Important examples include mixed formulations of the Biot model \cite{cai2022overlapping, hong2020parameter, lee2017parameter, oyarzua2016locking}, the coupling of fluid flow with porous media flow \cite{cai2009preconditioning, gatica2012twofold}, hybrid discontinuous Galerkin approximation of Stokes problem \cite{cockburn2014devising}, liquid crystal problem \cite{beik2018block, Ramage2013preconditioner} and optimization problems
\cite{howell2011infsup, lee2017parameter, mardal2017robust, Pearson2012new, Sogn2019Schur}. Some of these problems (or after permutations) will lead to a twofold saddle point problem \cite{beik2018block, cao2019shift, gatica2012twofold, howell2011infsup, huang2019spectral, Sogn2019Schur, wang2021augmented, xie2020note} (or the so-called double saddle point problem) of the following form.
\begin{equation}\label{matrix_A}
\mathcal{A}{\bm x}=
\left[\begin{array}{ccc}
A_{1}     & B^T_{1} & 0 \\[1mm]
C_1 &-A_{2}     & B^T_{2} \\[1mm]
 0             &C_{2}     &A_3 \\
\end{array}\right]
\left[\begin{array}{c}
{\bm x}_1\\[1mm]
{\bm x}_2\\[1mm]
{\bm x}_3
\end{array}\right]=
\left[\begin{array}{c}
 {\bm f}_{1}\\[1mm]
{\bm f}_{2}\\[1mm]
{\bm f}_{3}
\end{array}\right].
\end{equation} 
A negative sign in front of $A_{2}$ is just for the ease of notation.
After simple permutations, the system matrix of (\ref{matrix_A}) can be rewritten into the following form.
\begin{equation}\label{permuted_form1}
\mathcal{A}=
\left[\begin{array}{ccc}
A_{1}     & 0& B^T_{1} \\[1mm]
0   &A_3     & C_{2} \\[1mm]
C_{1}            &B_{2}^T     &-A_{2}\\
\end{array}\right].
\end{equation}
We call the system matrix in (\ref{permuted_form1}) permutation-equivalent to that in (\ref{matrix_A}). Without causing confusion, we continue to
use the notation $\mathcal{A}$ for the permuted matrix (\ref{permuted_form1}).
The linear system in (\ref{permuted_form1}) arises naturally from the domain decomposition methods \cite{mathew2008domain, Toselli2006domain}.
In this work, we only assume that $A_1$ is invertible and the global system matrix $\mathcal{A}$ is invertible. Many special cases can be cast into the
above forms of twofold saddle point systems. For example, (a) $A_2=0$; (b) $A_3=0$; (c) both $A_2=0$ and $A_3=0$. Our discussions will try to cover all these special cases.

The above $3$-by-$3$ block linear problems (\ref{matrix_A}) and  (\ref{permuted_form1}) can be naturally extended to the $n$-tuple cases. For example, when the system matrix in (\ref{matrix_A}) is extended to the $n$-tuple case, it is the block tridiagonal systems discussed in \cite{Sogn2019Schur}. When
the system matrix in (\ref{permuted_form1}) is extended to the $n$-tuple case, it corresponds to the linear system resulting from the domain decomposition method for elliptic
problems with $n-1$ subdomains.  In many references, these linear systems are assumed to be symmetric. No matter whether it is symmetric or not, $\mathcal{A}$ is generally indefinite. For solving such a system in large-scale computations, Krylov subspace methods with preconditioners are usually applied. The analysis in \cite{murphy2000note, ipsen2001note} indicates that one should employ Schur complement based preconditioners \cite{benzi2005numerical, cao2006class, cao2007positive, lin2007note} and then resort to replacing each exact Schur complement with an approximate one. If a Krylov subspace method satisfies an optimality or Galerkin property (such as conjugate gradient method and minimal residual methods \cite{Eiermann2001geometric, murphy2000note, Saad2003iterative}), the number of iterations depends on the degree of the minimal polynomial that the preconditioned matrix $\mathcal{T}$ satisfies \cite{murphy2000note, ipsen2001note, Saad2003iterative}. More precisely, the number of iterations depends on the dimension of the Krylov subspace $\mbox{Span}\{{\bm r},\mathcal{T}{\bm r},\mathcal{T}^2{\bm r},\mathcal{T}^3{\bm r},\ldots\}$. For a classical $2$-by-$2$ saddle point problem, by employing block-diagonal or block-triangular preconditioners with the $(2, 2)$ block being the Schur complement, it has been shown that the corresponding preconditioned system satisfies a polynomial of the degree of $4$ or $2$ \cite{murphy2000note, ipsen2001note}. Therefore, devising preconditioners based on Schur complements is of vital importance.

In this study, we explore two methodologies for designing preconditioners tailored for $3$-by-$3$ block systems exhibiting block tridiagonal form, as well as their extensions to $n$-tuple scenarios. One approach centers around the nested (or recursive) Schur complement \cite{Sogn2019Schur}. Unlike the approach presented in \cite{Sogn2019Schur}, our work adopts a more expansive assumption, allowing $A_1$, $A_2$, and $A_3$ to be non-symmetric or negative definite. 
Another approach is founded on the permuted matrix (\ref{permuted_form1}). Initially treating the first 2-by-2 block as a unified entity, albeit potentially indefinite, we transform the system into the saddle point form discussed in \cite{ipsen2001note}. Subsequently, we explore preconditioners based on an additive-type Schur complement \cite{cao2022additive, mathew2008domain, Toselli2006domain}. For $3$-by-$3$ block systems, the additive-type Schur complement-based preconditioner yields a system that satisfies a polynomial of degree 2 (block triangular) or 4 (block diagonal), while preconditioners based on the nested Schur complement satisfy a polynomial with a degree of 3 (block triangular) or 6 (block diagonal).
When extended to $n$-tuple scenarios, if the preconditioned system is diagonalizable, the degree of the polynomial satisfied by a nested Schur complement-based preconditioner is significantly higher than that satisfied by an additive Schur complement-based preconditioner. Thus, from an algebraic perspective, when employing the GMRES method, the additive-type preconditioner is favored. The generalization of the form (\ref{permuted_form1}) to $n$-tuple cases corresponds to the linear system resulting from domain decomposition methods, referencing some literature results on domain decomposition methods.

Commencing with a twofold saddle point problem, we generalize our theory to $n$-tuple block tridiagonal saddle point problems. Our study demonstrates that judiciously selecting signs in front of Schur complements in preconditioners results in a positively stable preconditioned system \cite{cao2007positive}. By using the Routh–Hurwitz stability criterion, we prove that the appropriate sign in front of each Schur complement yields a positively stable preconditioned system \cite{cao2007positive}. For positively stable preconditioners, we highlight that they outperform other preconditioners if further inexact approximations are applied to the corresponding Schur complements.
Numerical experiments, exemplified by a 3-field formulation of the Biot model, underscore the superior performance of positively stable preconditioners compared to other alternatives. Notably, we refrain from assuming the positive definiteness of each $A_i$, and the linear systems investigated in this study extend beyond saddle point problems. In the concluding remarks, we discuss and highlight the potential advantages and disadvantages of the two types of preconditioners, particularly when considering their corresponding inexact versions.

The outline of the remainder of this paper is as follows. In section \ref{sec:NestSchur}, we briefly recall the classic saddle point problem and its Schur complement, and introduce the twofold saddle point problem and the form of Schur complement, we then construct and analyze the block-triangular and block-diagonal preconditioners based on Schur complement for twofold saddle point problems. Furthermore, we extend these results to the $n$-tuple saddle point problem in Section \ref{subsec:NestGSchur}. Some additive Schur complement based preconditioners are constructed and the corresponding known results in the literature are recalled in Section \ref{sec:AddSchur} for twofold saddle point problems. Generalizations to $n$-tuple cases are provided in Section 5. In Section 6, numerical experiments for a 3-field formulation of the Biot model are provided to justify the advantages of using positively stable preconditioners. Finally, concluding remarks are given in Section \ref{sec:Conclusion}.

\section{Nested Schur complement based preconditioners for twofold saddle point systems}\label{sec:NestSchur}
The matrix form of a classic saddle point problem \cite{benzi2005numerical, ipsen2001note} reads as
$$
\mathcal{A}=
\left[\begin{array}{ccc}
A     & B^T\\
C     &-D
\end{array}\right].
$$
Following the notation in \cite{Sogn2019Schur}, the Schur complement of $\mathcal{A}$ is defined as
$$
\mbox{Schur}(\mathcal{A})=-D -CA^{-1}B^T.
$$
In many references, $A$ is assumed to be a symmetric positive definite, if $C$ is assumed to be $B$ and $D$ is zero, then the system is called
KKT system or saddle point system. Usually, $D$ is assumed to be a symmetric and semi-positive definite. In this paper, we only make some assumptions
that can guarantee the invertibility of $\mathcal{A}$. We assume that $A$ and the Schur complement $\mbox{Schur}(\mathcal{A})$
are invertible. For the twofold saddle point problem (\ref{matrix_A}), following the notations in \cite{Sogn2019Schur}, we denote
$$
S_1=A_{1}, \quad S_2=A_2+C_{1}S_{1}^{-1}B_{1}^T,
$$
and denote the nested Schur complement as
$$
S_3=A_3+C_{2}S_2^{-1}B_{2}^T.
$$
In \cite{Sogn2019Schur}, each $S_i$ is assumed to be symmetric positive definite (SPD). From the expressions, if $\mathcal{A}$ is symmetric and all $A_i~i=1,\ldots,3$ are symmetric positive definite, then $S_i$ are SPD. In this work, we only assume that each $S_i$ is invertible. Such assumptions are sufficient enough to guarantee the invertibility of the global system. Under such an assumption, the block $LDU$ decomposition of the system matrix is
\begin{equation}\label{LDUdecomp}
\mathcal{A}=
\left[\begin{array}{ccc}
I     & 0 & 0 \\[1mm]
C_{1}A_{1}^{-1} &I     & 0 \\[1mm]
 0             &-C_{2}S_2^{-1}     &I
\end{array}\right]
\left[\begin{array}{ccc}
A_{1}     & 0 & 0 \\[1mm]
0 &-S_2     & 0 \\[1mm]
 0             &0     &S_3
\end{array}\right]
\left[\begin{array}{ccc}
I     & A_{1}^{-1}B_{1}^T & 0 \\[1mm]
0 &I     & -S^{-1}_2 B_{2}^T\\[1mm]
 0             &0     &I
\end{array}\right].
\end{equation}
For ease of presentation, we will denote the above 3 block matrices as $\mathcal{L}$, $\mathcal{D}$, and $\mathcal{U}$,  respectively.
We study both block-triangular and block-diagonal preconditioners for the system matrix (\ref{matrix_A}). For block-triangular preconditioners, we focus on a lower triangular type with left preconditioning because an upper triangular one with right preconditioning can be discussed in a similar way \cite{benzi2005numerical, ipsen2001note}. We consider the following preconditioner:
\begin{equation}\label{P_1}
\mathcal{P}_{T_1}
=\left[\begin{array}{ccc}
A_{1}     & 0 & 0 \\[1mm]
C_{1} &-S_2     & 0 \\[1mm]
 0             &C_{2}     &S_3
\end{array}\right] =\mathcal{L} \mathcal{D}.
\end{equation}

\begin{theorem}\label{Theo_tri}
Assume that all $S_i, i=1, 2, 3$, are invertible, then, for the preconditioner (\ref{P_1}),
\begin{equation}\label{U_part}
\mathcal{P}_{T_1}^{-1}\mathcal{A}= \mathcal{U},
\end{equation}
and $\mathcal{P}_{T_1}^{-1}\mathcal{A}$ satisfies the polynomial equation $(\lambda -1)^3=0$. If we further assume that $B_{1}^TS^{-1}_2 B_{2}^T$ is nonzero,
then $(\lambda -1)^3$ is the minimal polynomial.
\end{theorem}
\begin{proof}
From (\ref{LDUdecomp}), because $\mathcal{A}=\mathcal{L}\mathcal{D}\mathcal{U}$,
$\mathcal{P}_{T_1}^{-1}\mathcal{A}= (\mathcal{L} \mathcal{D})^{-1}\mathcal{A}= \mathcal{U}$, which is the block upper
triangular matrix of the form in (\ref{U_part}).

On one hand, $(\mathcal{P}_{T_1}^{-1}\mathcal{A}-I)^3=0$. To verify whether $(\lambda -1)^3$ is the minimal polynomial,
we assume that there exist nonzero numbers $a, b, c$ such that $a \mathcal{U}^2+b \mathcal{U}+cI=0$. We note that
$$
\mathcal{U}^2= \left[\begin{array}{ccc}
I     & 2A_{1}^{-1}B_{1}^T & -A_{1}^{-1}B_{1}^TS^{-1}_2 B_{2}^T \\[1mm]
0     & I    & -2S^{-1}_2 B_{2}^T\\[1mm]
 0    &0      & I
\end{array}\right],
$$
then
$$
a \mathcal{U}^2+b \mathcal{U}+c I =
 \left[\begin{array}{ccc}
(a+b+c))I     & (2a+b)A_{1}^{-1}B_{1}^T & -a A_{1}^{-1}B_{1}^TS^{-1}_2 B_{2}^T \\[1mm]
0     & (a+b+c)I    & -(2a+b)S^{-1}_2 B_{2}^T\\[1mm]
 0    &0      & (a+b+c)I
\end{array}\right]=0.
$$
Because both $A_1$ is invertible and $B_{1}^TS^{-1}_2 B_{2}^T$ is nonzero, from the $(1, 3)$ entry of $a \mathcal{U}^2+b \mathcal{U}+c I$, we derive that $a=0$. Then, from the fact that the $(1, 2)$ and $(2, 3)$ entries of $a \mathcal{U}^2+b \mathcal{U}+c I$ are zeros, we conclude $b=0$. Lastly,
from the diagonal entries, see that $c=0$. Therefore, $(\lambda -1)^3$ is the minimal polynomial of $\mathcal{P}_{T_1}^{-1}\mathcal{A}$.
\end{proof}

Other preconditioners which will make the corresponding preconditioned systems have eigenvalues $1$ or $-1$ are
\begin{equation}\label{P_234}
\mathcal{P}_{T_2}=\left[\begin{array}{ccc}
A_{1}     & 0 & 0 \\
C_{1} &S_2     & 0 \\
 0             &-C_{2}     &S_3 \\
\end{array}\right],
\quad
\mathcal{P}_{T_3}=\left[\begin{array}{ccc}
A_{1}     &0 & 0 \\
 C_{1} &S_2     &0  \\
 0             &-C_{2}     &-S_3 \\
\end{array}\right],
\quad
\mathcal{P}_{T_4}=\left[\begin{array}{ccc}
A_{2}     & 0 & 0 \\[1mm]
C_{1} &-S_2     & 0 \\[1mm]
 0             &C_{2}     &-S_3
\end{array}\right].
\end{equation}

{\bf Remark 1.} For preconditioners listed in (\ref{P_234}), we have the following conclusions.
\begin{equation*}
\mathcal{P}_{T_2}^{-1}\mathcal{A}=
\left[\begin{array}{ccc}
I     & A_{1}^{-1}B_{1}^T & 0 \\
0&-I     & S^{-1}_2 B_{2}^T \\
 0             &0     &I \\
\end{array}\right],
\end{equation*}
and $\mathcal{P}_{T_2}^{-1}\mathcal{A}$ satisfies the polynomial equation $(\lambda -1)^2(\lambda+1)=0$.
\begin{equation*}
\mathcal{P}_{T_3}^{-1}\mathcal{A}=
\left[\begin{array}{ccc}
I     & A_{1}^{-1}B_{1}^T & 0 \\
0 &-I     & S^{-1}_2 B_{2}^T \\
 0             &0    &-I \\
\end{array}\right],
\end{equation*}
and $\mathcal{P}_{T_3}^{-1}\mathcal{A}$ satisfies the polynomial equation $(\lambda -1)(\lambda +1)^2=0$.
\begin{equation*}
\mathcal{P}_{T_4}^{-1}\mathcal{A}=
\left[\begin{array}{ccc}
I     & A_{1}^{-1}B_{1}^T & 0 \\[1mm]
0 &I     & -S^{-1}_2 B_{2}^T\\[1mm]
 0             &0     &-I
\end{array}\right],
\end{equation*}
and $\mathcal{P}_{T_4}^{-1}\mathcal{A}$ satisfies the polynomial equation $(\lambda-1)^2(\lambda+1)=0$.


{\bf Definition}: A matrix is said to be stable if the real parts of all the eigenvalues are
negative. Correspondingly, if the real parts of all the eigenvalues of a matrix $A$ are positive, we say that these eigenvalues are positive real.
If a preconditioner $\mathcal{P}$ of $\mathcal{A}$ satisfies $\mathcal{P}^{-1}\mathcal{A}$ is positive real, we call $\mathcal{P}$ is a positive
stable preconditioner of $\mathcal{A}$.

A sufficient condition to guarantee the eigenvalues of a matrix $A$ is positive real is the Hermitian part of $A$, i.e., $\frac{1}{2}(A + A^H)$, is SPD. The reverse direction is not true. There are many matrices whose
eigenvalues are positive but their Hermitian parts are not SPD.

\begin{theorem} \label{Theo_diag}
Let the assumptions in Theorem \ref{Theo_tri} hold. We further assume that $A_{2}=0$ and $A_3=0$. If the block-diagonal preconditioner is
\begin{equation}\label{P_D1}
\mathcal{P}_{D_1}=\left[
\begin{array}{ccc}
A_{1} &0&0\\
0&S_2&0\\
0&0&S_3
\end{array}
\right],
\end{equation}
then,
\begin{equation}\label{preD_sys}
\mathcal{T}_1=\mathcal{P}_{D_1}^{-1}\mathcal{A}=
\left[\begin{array}{ccc}
I     &A_{1}^{-1} B^T_{1} & 0 \\[1mm]
S_2^{-1}C_{1} &0     & S_2^{-1}B^T_{2} \\[1mm]
 0             &S_3^{-1}C_{2}     &0
\end{array}\right],
\end{equation}
which satisfies the polynomial equation $(\lambda -1) (\lambda^2-\lambda-1)(\lambda^3-\lambda^2 -2\lambda +1)=0$.
\end{theorem}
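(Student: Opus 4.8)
The plan is to avoid computing $\mathcal{T}_1^{6}$ directly and instead exploit two algebraic identities that the hypotheses $A_2=A_3=0$ force. Writing $P:=A_{1}^{-1}B_{1}^{T}$, $Q:=S_{2}^{-1}C_{1}$, $R:=S_{2}^{-1}B_{2}^{T}$ and $U:=S_{3}^{-1}C_{2}$, the definitions $S_2=C_{1}A_{1}^{-1}B_{1}^{T}$ and $S_3=C_{2}S_{2}^{-1}B_{2}^{T}$ (which hold precisely because $A_2=A_3=0$) yield the two fundamental relations $QP=I$ and $UR=I$. In this notation $\mathcal{T}_1$ is the block matrix with rows $(I,P,0)$, $(Q,0,R)$, $(0,U,0)$, and the whole statement reduces to showing that $p(\lambda)=(\lambda-1)(\lambda^2-\lambda-1)(\lambda^3-\lambda^2-2\lambda+1)$ annihilates this matrix using only $QP=I$ and $UR=I$.

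My strategy is to split the underlying space $V=V_1\oplus V_2\oplus V_3$ into three $\mathcal{T}_1$-invariant subspaces, one matched to each factor of $p$. First, $W_0:=\ker Q\times\{0\}\times\{0\}$ is fixed pointwise by $\mathcal{T}_1$ (if $Qx_1=0$ the off-diagonal coupling dies), so $(\mathcal{T}_1-I)$ vanishes on $W_0$. Second, $W_1:=\{(Pa,b,0):a,b\in\ker U\}$ is invariant — using $QP=I$ and $Ub=0$ one gets $\mathcal{T}_1(Pa,b,0)=(P(a+b),a,0)$ — and on each two-dimensional cyclic piece $\mathrm{span}\{(0,v,0),(Pv,0,0)\}$ with $v\in\ker U$ the matrix of $\mathcal{T}_1$ is $\left[\begin{smallmatrix}0&1\\1&1\end{smallmatrix}\right]$, whose characteristic polynomial is $\lambda^2-\lambda-1$. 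Third, $W_2:=\{(PRc,Rb,a):a,b,c\in V_3\}$ is invariant by the same bookkeeping ($\mathcal{T}_1(PRc,Rb,a)=(PR(b+c),R(a+c),b)$, using $QP=I$ and $UR=I$), and the cyclic vector $u_0=(0,0,v_3)$ satisfies $u_0\mapsto(0,Rv_3,0)\mapsto(PRv_3,0,v_3)\mapsto(PRv_3,2Rv_3,0)=-u_0+2\mathcal{T}_1u_0+\mathcal{T}_1^{2}u_0$, i.e. $(\mathcal{T}_1^{3}-\mathcal{T}_1^{2}-2\mathcal{T}_1+I)u_0=0$; equivalently $\mathcal{T}_1$ acts on $W_2$ through the $3\times3$ symbol $\left[\begin{smallmatrix}1&1&0\\1&0&1\\0&1&0\end{smallmatrix}\right]$, whose characteristic polynomial is exactly the cubic factor.

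It then remains to glue the pieces, i.e. to prove $V=W_0\oplus W_1\oplus W_2$. The two identities make $Q$ and $U$ surjective, so $\dim\ker Q=n_1-n_2$ and $\dim\ker U=n_2-n_3$; hence $\dim W_0+\dim W_1+\dim W_2=(n_1-n_2)+2(n_2-n_3)+3n_3=n_1+n_2+n_3$, the full dimension. I would combine this with an explicit spanning argument: decomposing $V_1=\mathrm{Ran}\,P\oplus\ker Q$ and $V_2=\mathrm{Ran}\,R\oplus\ker U$ (the two splittings coming from the idempotents $PQ$ and $RU$) and then assembling an arbitrary $(x_1,x_2,x_3)$ from the explicit generators of $W_0,W_1,W_2$. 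Once the direct sum is established, the conclusion is immediate: $p$ factors into commuting polynomials in $\mathcal{T}_1$, and on each summand $W_i$ the attached factor already annihilates it, so $p(\mathcal{T}_1)$ vanishes on each $W_i$ and therefore on all of $V$.

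The hard part will be the gluing step, since the dimension count alone is only suggestive: one must genuinely produce the decomposition of a general vector (or prove independence directly), and this is exactly where the range–kernel splittings of $V_1,V_2$ and the injectivity of $P,R,PR$ — all consequences of $QP=I$ and $UR=I$ — are needed. The invariance and symbol computations for $W_1$ and $W_2$ are routine once the two identities are available; checking that the cubic has simple roots (so that the symbol really governs the minimal polynomial) is a minor point, and in any case is unnecessary for the stated annihilation claim, for which the Cayley–Hamilton relation on each block suffices.
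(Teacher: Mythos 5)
Your proposal is correct, but it takes a genuinely different route from the paper. The paper's proof is a direct verification: it computes $\mathcal{T}_1^2$ and $\mathcal{T}_1^3$ explicitly, forms the three factors $\mathcal{T}_1-I$, $\mathcal{T}_1^2-\mathcal{T}_1-I$, $\mathcal{T}_1^3-\mathcal{T}_1^2-2\mathcal{T}_1+I$, and checks block by block (using exactly your two identities $QP=I$, $UR=I$) that all nine blocks of the triple product vanish. You instead exhibit $V$ as a sum of three $\mathcal{T}_1$-invariant subspaces on which $\mathcal{T}_1$ acts through the symbols $1$, $\bigl[\begin{smallmatrix}0&1\\1&1\end{smallmatrix}\bigr]$ and a $3\times 3$ companion-like matrix, so that each factor of the polynomial is the characteristic polynomial of the corresponding symbol; I have checked your invariance computations and the cyclic relations on $W_1$ and $W_2$, and they are right. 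The gluing step you flag as the hard part does go through exactly as you indicate: since $PQ$ and $RU$ are idempotents, $V_1=\mathrm{Ran}\,P\oplus\ker Q$ and $V_2=\mathrm{Ran}\,R\oplus\ker U$, and a general vector decomposes explicitly as $x_1=(I-PQ)x_1+P(I-RU)Qx_1+PR\,UQx_1$, $x_2=(I-RU)x_2+R\,Ux_2$, $x_3=x_3$, distributing the pieces to $W_0$, $W_1$, $W_2$; only the spanning $V=W_0+W_1+W_2$ is needed for annihilation (directness, which follows from your dimension count, is a bonus). What your approach buys is an explanation of \emph{where} the two nontrivial factors come from --- they are the characteristic polynomials of small symbol matrices, and they coincide with $\tilde p_2,\tilde p_3$ from the recursion $\tilde p_i=\lambda\tilde p_{i-1}-\tilde p_{i-2}$ mentioned in Remark~4 --- and it scales to the $n$-tuple case without recomputing ever larger powers of $\mathcal{T}_n$; indeed it proves genuine annihilation rather than only locating eigenvalues, which is slightly stronger than the eigenvector argument the paper later uses for the $n$-tuple analogue. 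What the paper's computation buys is brevity and no need for the spanning argument. Your closing remark is also correct: simplicity of the roots is irrelevant here, since Cayley--Hamilton on each block already gives annihilation.
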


\begin{proof}
By a simple calculation, one can derive that (\ref{preD_sys}) holds. Then, we see that
\begin{equation*}
\mathcal{T}_1^2=
\left[\begin{array}{ccc}
I+A_{1}^{-1} B^T_{1}S_2^{-1}C_{1}    & A_{1}^{-1} B^T_{1}           &A_{1}^{-1} B^T_{1}S_2^{-1}B^T_{2}\\[2mm]
S_2^{-1}C_{1}                                        &I+S_2^{-1}B^T_{2}S_3^{-1}C_{2}        &0\\[2mm]
S_3^{-1}C_{2}S_2^{-1}C_{1}                        &0                                              &I
\end{array}\right],
\end{equation*}

\begin{equation*}
\mathcal{T}_1^3=
\left[\begin{array}{ccc}
I+2A_{1}^{-1} B^T_{1}S_2^{-1}C_{1}  & A_{1}^{-1} B^T_{1}(2I+S_2^{-1}B^T_{2}S_3^{-1}C_{2})  &A_{1}^{-1} B^T_{1}S_2^{-1}B^T_{2}\\[2mm]
(2I+S_2^{-1}B^T_{2}S_3^{-1}C_{2})S_2^{-1}C_{1} & I & 2S_2^{-1}B^T_{2}\\[2mm]
S_3^{-1}C_{2}S_2^{-1}C_{1} &2S_3^{-1}C_{2}
& 0
\end{array}\right],
\end{equation*}
and
\begin{equation*}
\mathcal{T}_1-I=
\left[\begin{array}{ccc}
0                         &A_{1}^{-1} B^T_{1}        & 0 \\[2mm]
S_2^{-1}C_{1}    &-I                                         & S_2^{-1}B^T_{2} \\[2mm]
 0                        &S_3^{-1}C_{2}                          &-I \\
\end{array}\right],
\end{equation*}
\begin{equation*}
(\mathcal{T}_1^2-\mathcal{T}_1-I)=
\left[\begin{array}{ccc}
-I+A_{1}^{-1} B^T_{1}S_2^{-1}C_{1}    & 0           &A_{1}^{-1} B^T_{1}S_2^{-1}B^T_{2}\\[2mm]
0                                                              &S_2^{-1}B^T_{2}S_3^{-1}C_{2}        &- S_2^{-1}B^T_{2} \\[2mm]
S_3^{-1}C_{2}S_2^{-1}C_{1}                         &-S_3^{-1}C_{2}                                             &0
\end{array}\right],
\end{equation*}
\begin{equation*}
\mathcal{T}_1^3-\mathcal{T}_1^2-2\mathcal{T}_1+I=
\left[\begin{array}{ccc}
-I+A_{1}^{-1} B^T_{1}S_2^{-1}C_{1}       & A_{1}^{-1} B^T_{1}(-I+S_2^{-1}B^T_{2}S_3^{-1}C_{2})  &0\\[2mm]
(-I+S_2^{-1}B^T_{2}S_3^{-1}C_{2})S_2^{-1}C_{1}   & I-S_2^{-1}B^T_{2}S_3^{-1}C_{2}                                       & 0\\[2mm]
0 &0& 0
\end{array}\right].
\end{equation*}
Combining all of the above, one can verify that
\begin{equation*}\begin{array}{ll}
(\mathcal{T}_1^2-\mathcal{T}_1-I)(\mathcal{T}_1-I)=(\mathcal{T}_1-I)(\mathcal{T}_1^2-\mathcal{T}_1-I)=\\[4mm]
\qquad\qquad\qquad\left[\begin{array}{ccc}
0                                                          &A_{1}^{-1} B^T_{1}S_2^{-1}B^T_{2}S_3^{-1}C_{2}       & -A_{1}^{-1} B^T_{1}S_2^{-1}B^T_{2}\\[2mm]
S_2^{-1}B^T_{2}S_3^{-1}C_{2}S_2^{-1}C_{1} &-2S_2^{-1}B^T_{2}S_3^{-1}C_{2}                                       &2S_2^{-1}B^T_{2}\\[2mm]
-S_3^{-1}C_{2}S_2^{-1}C_{1}                    &2S_3^{-1}C_{2}                                                            & -I
\end{array}\right].
\end{array}
\end{equation*}
The $(i, j)$ block of $(\mathcal{T}_1-I)(\mathcal{T}_1^2-\mathcal{T}_1-I)(\mathcal{T}_1^3-\mathcal{T}_1^2-2\mathcal{T}_1+I)$ is denoted as $A_{ij}$. Because $A_2$ and $A_3$ are zeros, and
hence $S_i=C_{i-1} S_{i-1}^{-1} B_{i-1}^T, i=2, 3$. One can verify that
\begin{equation*}
\begin{array}{ll}
A_{11}=A_{1}^{-1} B^T_{1}S_2^{-1}B^T_{2}S_3^{-1}C_{2}(-I+S_2^{-1}B^T_{2}S_3^{-1}C_{2})S_2^{-1}C_{1}=0,\\[2mm]
A_{12}=A_{1}^{-1} B^T_{1}S_2^{-1}B^T_{2}S_3^{-1}C_{2}(I-S_2^{-1}B^T_{2}S_3^{-1}C_{2})=0,\\[2mm]
A_{21}=S_2^{-1}B^T_{2}S_3^{-1}C_{2}S_2^{-1}C_{1}(-I+A_{1}^{-1} B^T_{1}S_2^{-1}C_{1})\\[2mm]
      \qquad\quad-2S_2^{-1}B^T_{2}S_3^{-1}C_{2}(-I+S_2^{-1}B^T_{2}S_3^{-1}C_{2})S_2^{-1}C_{1}=0,\\[2mm]
A_{22}=S_2^{-1}B^T_{2}S_3^{-1}C_{2}S_2^{-1}C_{1}A_{1}^{-1} B^T_{1}(-I+S_2^{-1}B^T_{2}S_3^{-1}C_{2})\\[2mm]
       \qquad\quad-2S_2^{-1}B^T_{2}S_3^{-1}C_{2}(I-S_2^{-1}B^T_{2}S_3^{-1}C_{2})=0,\\[2mm]
A_{31}=-S_3^{-1}C_{2}S_2^{-1}C_{1}(-I+A_{1}^{-1} B^T_{1}S_2^{-1}C_{1})\\[2mm]
       \qquad\quad+2S_3^{-1}C_{2}(-I+S_2^{-1}B^T_{2}S_3^{-1}C_{2})S_2^{-1}C_{1}=0,\\[2mm]
A_{32}=-S_3^{-1}C_{2}S_2^{-1}C_{1}A_{1}^{-1} B^T_{1}(-I+S_2^{-1}B^T_{2}S_3^{-1}C_{2})\\[2mm]
\qquad\quad+2S_3^{-1}C_{2}(I-S_2^{-1}B^T_{2}S_3^{-1}C_{2})=0,\\[2mm]
A_{13}=0,\quad A_{23}=0, \quad A_{33}=0.
\end{array}
\end{equation*}
We therefore conclude that
\begin{equation}\label{pred1}
(\mathcal{T}_1-I)(\mathcal{T}_1^2-\mathcal{T}_1-I)(\mathcal{T}_1^3-\mathcal{T}_1^2-2\mathcal{T}_1+I)=0.
\end{equation}
\end{proof}

We note that (\ref{pred1}) can be factorized into distinct linear factors. Correspondingly, the roots of the above polynomial are:
$$
-1.2470,~ -0.6180, ~0.4450,~ 1, ~1.6180, ~1.8019.
$$
It follows that the eigenvalues of $\mathcal{T}_1$ must be among these values.
One can use the method of contradiction to prove that there is no polynomial of degree less than $6$ such that
$\mathcal{T}_{1}$ satisfies the polynomial equation.
Theorem 2.3 in \cite{Sogn2019Schur} shows that
if $\mathcal{A}$ is a symmetric matrix, then the eigenvalues of the preconditioned system $\mathcal{T}_1$ are
\begin{equation*}
\sigma_p(\mathcal{T}_1) = \left\lbrace  2\cos\left(\frac{2i-1}{2j+1}\pi\right) \colon j = 1,\ldots,3, \ i = 1,\ldots, j \right\rbrace,
\end{equation*}
Correspondingly, the condition number is
\begin{equation*}
\kappa(\mathcal{T}_1) \leq \frac{\cos\left(\frac{\pi}{7}\right)}{\sin\left(\frac{\pi}{14}\right)}\approx4.05 .
\end{equation*}
Our conclusion for eigenvalues in Theorem \ref{Theo_diag} is consistent with that in \cite{Sogn2019Schur}. However, the assumption that
$\mathcal{A}$ is symmetric and is removed in our analysis.

We also consider the following Schur complement based block-diagonal preconditioners:
\begin{equation}\label{P_D2to4}
\mathcal{P}_{D_2}=\left[
\begin{array}{ccc}
A_{1} &0   &0\\
0    &S_2  &0\\
0    &0    &-S_3
\end{array}
\right], \quad
\mathcal{P}_{D_3}=\left[
\begin{array}{ccc}
A_{1} &0 &0\\
0    &-S_2&0\\
0    &0&S_3
\end{array}
\right], \quad
\mathcal{P}_{D_4}=\left[
\begin{array}{ccc}
A_{1} &0&0\\
0&-S_2&0\\
0&0&-S_3
\end{array}
\right].
\end{equation}
Furthermore, if $A_{2}=0$ and $A_3=0$, we have the following conclusions: If the preconditioner is
$\mathcal{P}_{D_2}$, then
\begin{equation*}
\mathcal{T}_2=\mathcal{P}_{D_2}^{-1}\mathcal{A}=
\left[\begin{array}{ccc}
I     &A_{1}^{-1} B^T_{1} & 0 \\
S_2^{-1}C_{1} &0     & S_2^{-1}B^T_{2} \\
 0             &-S_3^{-1}C_{2}     &0 \\
\end{array}\right],
\end{equation*}
satisfies the polynomial equation $(\lambda-1)(\lambda^2-\lambda-1)(\lambda^3-\lambda^2-1)=0$;
Its roots are
$$
-0.618,~~ -0.2328 + 0.7926i, ~~ -0.2328 - 0.7926i,~~ 1, ~~1.4656,~~   1.618;
$$
If the preconditioner is $\mathcal{P}_{D_3}$, then
\begin{equation*}
\mathcal{T}_3=\mathcal{P}_{D_3}^{-1}\mathcal{A}=
\left[\begin{array}{ccc}
I     &A_{1}^{-1} B^T_{1} & 0 \\
-S_2^{-1}C_{1} &0     &- S_2^{-1}B^T_{2} \\
 0             &S_3^{-1}C_{2}     &0 \\
\end{array}\right],
\end{equation*}
which satisfies the polynomial equation $(\lambda-1)(\lambda^2-\lambda+1)(\lambda^3-\lambda^2+2\lambda-1)=0$;
Correspondingly, the roots are
$$
0.5698,~~ 1,~~   0.5 + 0.8660i,~~   0.5 - 0.8660i,~~   0.2151 + 1.3071i,~~   0.2151 - 1.3071i;
$$
If the preconditioner is $\mathcal{P}_{D_4}$, then
\begin{equation*}
\mathcal{T}_4=\mathcal{P}_{D_4}^{-1}\mathcal{A}=
\left[\begin{array}{ccc}
I     &A_{1}^{-1} B^T_{1} & 0 \\
-S_2^{-1}C_{1} &0     &- S_2^{-1}B^T_{2} \\
 0             &-S_3^{-1}C_{2}     &0 \\
\end{array}\right],
\end{equation*}
which satisfies the polynomial equation $(\lambda-1)(\lambda^2-\lambda+1)(\lambda^3-\lambda^2+1)=0$;
The roots are
$$
 -0.7549,~~ 1,~~   0.5 + 0.8660i,~~   0.5 - 0.8660i,~~   0.8774 + 0.7449i,~~   0.8774 - 0.7449i.
$$
We highlight here that all eigenvalues of $\mathcal{T}_3$ have positive real parts.
Therefore, $\mathcal{P}_{D_3}$ is a positively stable preconditioner for $\mathcal{A}$. We also comment here that when the $A_{1}$ and the Schur complements $S_2$ and $S_3$ are replaced by their inexact versions, some analysis on the eigenvalue distribution of the preconditioned systems are obtained in \cite{bradley2023eigenvalue}.

\section{Nested Schur complement based preconditioners for $n$-tuple block tridiagonal problems}\label{subsec:NestGSchur}
Now, we will discuss preconditioners for the $n$-tuple block tridiagonal problem of the following form.
  \begin{equation}
    \label{eq:ADefNN}
    \mathcal{A}_n =
    \left[\begin{array}{cccc}
      A_1 & B_1^T  &   & \\
      C_1 & -A_2  & \ddots &  \\
          &\ddots & \ddots & B_{n-1}^T \\[1ex]
          &       & C_{n-1}   & (-1)^{n-1}A_n
  \end{array}\right].
  \end{equation}
Similar to the $3$-by-$3$ block case, we define
$$ 
S_{i+1} = A_{i+1} + C_i S^{-1}_i B_i^T, \quad \text{for}\quad i = 1, 2,\ldots,n-1,
$$
with the initial setting $S_1 = A_1$. Here and hereafter, we assume that $S_i, i = 1, 2,\ldots,n$ are invertible.
The block tridiagonal system can be written recursively as
$$
\mathcal{A}_{i+1}=\left[\begin{array}{cccc}
   &\mathcal{A}_i\quad &\mathbb{B}_i^T\\
   &\mathbb{C}_i \quad &(-1)^{i}A_{i+1}
  \end{array}\right].
$$
Here,
$$
\mathbb{B}^T_i =
[0, . . . 0, B_i]^T,   \quad \quad \mathbb{C}_i = [0, . . . 0, C_i].
$$
Then, following the notations introduced in \cite{Sogn2019Schur},
$$
\begin{array}{ll}
   \mbox{Schur}(\mathcal{A}_{i+1})&=(-1)^{i}A_{i+1}-\mathbb{C}_i\mathcal{A}_i^{-1}\mathbb{B}_i^T\\[2mm]
                        &=(-1)^{i}(A_{i+1}+C_iS_i^{-1}B_i^T),
\end{array}$$
and $S_{i+1}=(-1)^{i}\mbox{Schur}(\mathcal{A}_{i+1})=A_{i+1}+C_iS_i^{-1}B_i^T$ with $S_1=A_1$.

Similar to the factorization (\ref{LDUdecomp}),
$\mathcal{A}_n$ can be factorized into
\begin{equation}\label{LDU_An}
\mathcal{A}_n =\mathcal{L}_n \mathcal{D}_n \mathcal{U}_n,
\end{equation}
where
\begin{equation}\label{Pre_Dn}
\mathcal{D}_n=\left[\begin{array}{cccc}
      A_1      & 0   &   & \\
      0      & -S_2   & \ddots &  \\
             &\ddots & \ddots & 0 \\[1ex]
             &       &0   & (-1)^{n-1}S_{n}
\end{array}\right],
\end{equation}
and
$$\begin{array}{ll}
\mathcal{L}_n=
    \left[\begin{array}{cccc}
      I      & 0   &   & \\
 C_1A_1^{-1} & I   & \ddots &  \\
             &\ddots & \ddots & 0 \\[1ex]
             &       & (-1)^{n-2}C_{n-1}S^{-1}_{n-1}   & I
  \end{array}\right]\quad
\mathcal{U}_n=\left[\begin{array}{cccc}
      I & A_1^{-1}B_1^T  &   & \\
      0 & I  & \ddots &  \\
          &\ddots & \ddots & (-1)^{n-2}S^{-1}_{n-1}B_{n-1}^T \\[1ex]
          &       & 0   & I
  \end{array}\right].
\end{array}
$$

\begin{theorem} Assume that all $S_{i},i=1,2,3,\ldots n$, are invertible. 
Let the preconditioner be
$$
\mathcal{P}_n =
  \left[\begin{array}{cccc}
    A_1 &  \\
    C_1 & -S_2 \\
     &  \ddots & \ddots \\
     & & C_{n-1} & (-1)^{n-1}S_{n}
  \end{array}\right]=\mathcal{L}_n\mathcal{D}_n,
$$
then
$
\mathcal{P}_n^{-1}\mathcal{A}_n=\mathcal{U}_n,$
and $\mathcal{P}_n^{-1}\mathcal{A}_n$ satisfies the polynomial $(\lambda-1)^n=0$.
Furthermore, if $\mathcal{A}_n$ is symmetric and $A_i,~i=1,2,\ldots,n$ are symmetric positive definite, let the preconditioner be $\mathcal{D}_n$, then all eigenvalues of $\mathcal{D}_n^{-1}\mathcal{A}_n$ are positive real.
\end{theorem}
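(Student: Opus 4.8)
The plan is to treat the two assertions separately, since the first follows directly from the factorization (\ref{LDU_An}), whereas the second needs the symmetry and definiteness hypotheses. For the first assertion I would begin by multiplying out the lower-triangular and block-diagonal factors in (\ref{LDU_An}) to check that $\mathcal{L}_n\mathcal{D}_n=\mathcal{P}_n$: the $(i,i)$ block is $(-1)^{i-1}S_i$, and the $(i{+}1,i)$ block is $(-1)^{i-1}C_iS_i^{-1}\cdot(-1)^{i-1}S_i=C_i$, which matches $\mathcal{P}_n$ blockwise. Hence $\mathcal{A}_n=\mathcal{P}_n\mathcal{U}_n$ and $\mathcal{P}_n^{-1}\mathcal{A}_n=\mathcal{U}_n$. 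Since $\mathcal{U}_n-I$ carries nonzero blocks only on the first super-block-diagonal, it is block strictly upper triangular and therefore nilpotent with $(\mathcal{U}_n-I)^n=0$; the minimal polynomial of $\mathcal{P}_n^{-1}\mathcal{A}_n$ thus divides $(\lambda-1)^n$, which is exactly the stated polynomial equation.

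For the second assertion I would first extract the structural consequences of the hypotheses. Symmetry of $\mathcal{A}_n$ forces $C_i=B_i$, so that $S_{i+1}=A_{i+1}+B_iS_i^{-1}B_i^T$; starting from the SPD block $S_1=A_1$ and using that each $A_{i+1}$ is SPD, an induction shows every $S_i$ is SPD, being the sum of an SPD matrix and a symmetric positive semidefinite one. With $C_i=B_i$ and $S_i$ symmetric, the superdiagonal block $(-1)^{i-1}S_i^{-1}B_i^T$ of $\mathcal{U}_n$ coincides with the transpose of the subdiagonal block $(-1)^{i-1}C_iS_i^{-1}$ of $\mathcal{L}_n$, so $\mathcal{U}_n=\mathcal{L}_n^T$ and (\ref{LDU_An}) sharpens to the symmetric factorization $\mathcal{A}_n=\mathcal{L}_n\mathcal{D}_n\mathcal{L}_n^T$. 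Finally I would record that, writing $\hat{\mathcal{D}}_n=\mathrm{diag}(S_1,\dots,S_n)$, which is SPD, and $\Sigma=\mathrm{diag}((-1)^{i-1}I)$, the preconditioner splits as $\mathcal{D}_n=\Sigma\hat{\mathcal{D}}_n=\hat{\mathcal{D}}_n\Sigma$.

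To secure reality of the spectrum I would route the argument through the SPD factor $\hat{\mathcal{D}}_n$ rather than the indefinite $\mathcal{D}_n$: under the similarity $X\mapsto\hat{\mathcal{D}}_n^{1/2}X\hat{\mathcal{D}}_n^{-1/2}$ the matrix $\hat{\mathcal{D}}_n^{-1}\mathcal{A}_n$ is carried to the symmetric matrix $\hat{\mathcal{D}}_n^{-1/2}\mathcal{A}_n\hat{\mathcal{D}}_n^{-1/2}$, whose eigenvalues are therefore real. I would then connect this to the target matrix $\mathcal{D}_n^{-1}\mathcal{A}_n=\hat{\mathcal{D}}_n^{-1}\Sigma\mathcal{A}_n$ by reducing the generalized eigenproblem $\mathcal{A}_n x=\lambda\mathcal{D}_n x$, via $\mathcal{A}_n=\mathcal{L}_n\mathcal{D}_n\mathcal{L}_n^T$, to the symmetric pencil $(\mathcal{D}_n,\mathcal{L}_n^{-1}\mathcal{D}_n\mathcal{L}_n^{-T})$, whose two matrices are congruent and hence share the inertia of $\mathcal{D}_n$. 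This sets up the eigenvalues as the generalized eigenvalues of a symmetric pencil, which is the right framework in which both reality and positivity can be read off.

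The hard part will be the sign bookkeeping introduced by $\Sigma$: because $\mathcal{D}_n$ is indefinite it has no real square root, so the clean congruence-to-SPD argument that delivers reality for $\hat{\mathcal{D}}_n$ does not transfer verbatim, and indefinite-indefinite symmetric pencils can in principle produce complex conjugate pairs. The crux is therefore to show that the alternating signature does not generate a skew contribution in the reduced pencil, i.e. that $(\mathcal{A}_n,\mathcal{D}_n)$ is simultaneously diagonalizable by congruence with eigenvalue ratios that are real and strictly positive. I expect this positivity and definitizability step—controlling the interaction between $\Sigma$ and the off-diagonal coupling blocks $B_i$—to be the main obstacle, and I would attack it by leveraging the recursive Schur-complement identity $S_{i+1}=A_{i+1}+B_iS_i^{-1}B_i^T$ that already underlies the SPD induction, propagating the sign information up the block tridiagonal recursion that governs $\det(\mathcal{A}_n-\lambda\mathcal{D}_n)$ so as to pin every eigenvalue onto the positive real axis.
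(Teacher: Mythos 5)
Your treatment of the first assertion is correct and coincides with the paper's own (one-line) argument: multiplying out gives $\mathcal{L}_n\mathcal{D}_n=\mathcal{P}_n$, hence $\mathcal{P}_n^{-1}\mathcal{A}_n=\mathcal{U}_n$, which is unit block upper triangular, so $(\mathcal{U}_n-I)^n=0$. Your SPD induction for the $S_i$ and the observation $\mathcal{U}_n=\mathcal{L}_n^T$ are also fine.

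The second assertion is where there is a genuine gap: you do not prove it, and the route you sketch cannot be completed, because the ``crux'' you defer --- showing the eigenvalues of $\mathcal{D}_n^{-1}\mathcal{A}_n$ are literally real and positive, via definitization of the symmetric--indefinite pencil --- is false. Take $n=2$ with $1\times 1$ blocks $A_1=A_2=1$, $B_1=C_1=1$; then $S_2=2$ and
$$
\mathcal{D}_2^{-1}\mathcal{A}_2=\begin{bmatrix}1&1\\[1mm] -\tfrac12&\tfrac12\end{bmatrix},
$$
whose characteristic polynomial is $\lambda^2-\tfrac32\lambda+1$, with eigenvalues $\tfrac34\pm\tfrac{\sqrt{7}}{4}\,i$: complex, though with positive real part, even though $\mathcal{A}_2$ is symmetric and both $A_i$ are SPD. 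So no simultaneous diagonalization by congruence with real positive eigenvalue ratios exists, and the theorem's phrase ``positive real'' must be read as ``positive real part'' (positive stable), which is how the paper uses it elsewhere (Remarks~3 and~5, and the complex right-half-plane roots in Theorem~3.2). The paper's actual proof sidesteps your obstacle entirely and is short: conjugate by $\mathcal{S}_n^{1/2}$ (your SPD $\hat{\mathcal{D}}_n^{1/2}$), forming $\mathcal{H}_n=\mathcal{S}_n^{1/2}\mathcal{D}_n^{-1}\mathcal{A}_n\mathcal{S}_n^{-1/2}$, and observe that the alternating signs in $\mathcal{D}_n$ make the off-diagonal blocks of $\mathcal{H}_n$ skew --- the $(i,i+1)$ block is $(-1)^{i-1}S_i^{-1/2}B_i^TS_{i+1}^{-1/2}$ while the $(i+1,i)$ block is $(-1)^{i}S_{i+1}^{-1/2}B_iS_i^{-1/2}$ --- so they cancel in the Hermitian part, leaving $\tfrac12(\mathcal{H}_n+\mathcal{H}_n^T)=\mathrm{diag}\bigl(I,\,S_2^{-1/2}A_2S_2^{-1/2},\dots,S_n^{-1/2}A_nS_n^{-1/2}\bigr)$, which is SPD since each $A_i$ is SPD; then for any eigenpair $\mathcal{H}_nx=\lambda x$ with $\|x\|=1$ one has $\mathrm{Re}(\lambda)=x^*\bigl(\tfrac12(\mathcal{H}_n+\mathcal{H}_n^*)\bigr)x>0$ (Bendixson/field-of-values). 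To repair your proposal, replace the ``real positive eigenvalues'' target by positive stability and run this Hermitian-part argument; everything else you set up can be kept.
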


\begin{proof}
Combining $\mathcal{L}_n\mathcal{D}_n$ in (\ref{LDU_An}), the first part of the conclusion is obvious.

For the second part, noting that if $\mathcal{A}_n$ is symmetric and $A_i,~i=1,2,\ldots,n$, are symmetric positive definite, then the matrix
\begin{equation}\label{Pre_Sn}
\mathcal{S}_n=\left[\begin{array}{cccc}
      S_1     &     &    &    \\
            &S_2  &    &    \\
             &   &\ddots  &  \\
               &       &    & S_{n}
\end{array}\right]
\end{equation}
is symmetric positive definite \cite{Sogn2019Schur}. We therefore can define $\mathcal{S}_n^{\frac{1}{2}}$, in which the $i$-th diagonal block is $S_i^{\frac{1}{2}}$.
To prove the eigenvalues of $\mathcal{D}_n^{-1}\mathcal{A}_n$ are positive real, we apply a similarity transform
to $\mathcal{D}_n^{-1}\mathcal{A}_n$, that is, left multiplying by $\mathcal{S}_n^{\frac{1}{2}}$ and right multiplying by $\mathcal{S}_n^{-\frac{1}{2}}$ to $\mathcal{D}_n^{-1}\mathcal{A}_n$ , we have
\begin{align}
\mathcal{S}_n^{\frac{1}{2}} \mathcal{D}_n^{-1} \mathcal{A}_n  \mathcal{S}_n^{-\frac{1}{2}}  &=
        \left[\begin{array}{cccc}
      I &    S_1^{-\frac{1}{2}}  B_1^T S_2^{-\frac{1}{2}}  &   & \\
      -S_2^{-\frac{1}{2}} B_1 S_1^{-\frac{1}{2}} &  S_2^{-\frac{1}{2}}A_2S_2^{-\frac{1}{2}}  & \ddots &  \\
          &\ddots & \ddots &  (-1)^{n-2} S_{n-1}^{-\frac{1}{2}}B_{n-1}^T S_n^{-\frac{1}{2}} \\[1ex]
          &       & (-1)^{n-1} S_n^{-\frac{1}{2}} B_{n-1} S_{n-1}^{-\frac{1}{2}} & S_n^{-\frac{1}{2}}A_n S_n^{-\frac{1}{2}}
  \end{array}\right]. \nonumber
\end{align}
We note that the Hermitian part of $\mathcal{S}_n^{\frac{1}{2}} \mathcal{D}_n^{-1} \mathcal{A}_n  \mathcal{S}_n^{-\frac{1}{2}}$ is
\begin{align}
\frac{1}{2} \left[ (\mathcal{S}_n^{\frac{1}{2}} \mathcal{D}_n^{-1} \mathcal{A}_n  \mathcal{S}_n^{-\frac{1}{2}})+(\mathcal{S}_n^{\frac{1}{2}} \mathcal{D}_n^{-1} \mathcal{A}_n  \mathcal{S}_n^{-\frac{1}{2}})^T \right]  &=
       \left[\begin{array}{cccc}
      I &     0 &   & \\
       0 &  S_2^{-\frac{1}{2}}A_2S_2^{-\frac{1}{2}}  & \ddots &  \\
         &\ddots & \ddots  & 0 \\[1ex]
         &       & 0  & S_n^{-\frac{1}{2}}A_n S_n^{-\frac{1}{2}}
  \end{array}\right] \nonumber
\end{align}
is symmetric positive definite. Therefore, we conclude that all eigenvalues of $\mathcal{D}_n^{-1}\mathcal{A}_n$ are positive real.

\end{proof}

{\bf Remark 2.} In \cite{Sogn2019Schur}, $\mathcal{S}_n$ is taken as a preconditioner of the block tridiagonal system $\mathcal{A}_n$.
The advantage is that $\mathcal{S}_n$ is SPD. However, the eigenvalues of $\mathcal{S}_n^{-1} \mathcal{A}_n$ may not be positive real. In comparison, $\mathcal{D}_n$ is a positively stable preconditioner for $\mathcal{A}_n$.

\begin{theorem}
Assume that $A_i=0$ for $i=2,3,\ldots, n$ in $\mathcal{A}_n$. If the Schur complement based preconditioner is
$\mathcal{D}_n$ in (\ref{Pre_Dn}), then the eigenvalues of $\mathcal{T}_n=\mathcal{D}_n^{-1}\mathcal{A}_n$ are the roots of polynomial
\begin{equation}\label{recurr_form}
\bar{p}_i=\lambda \bar{p}_{i-1}+\bar{p}_{i-2}, ~i=2,3,4,\ldots,n,
\end{equation}
with $\bar{p}_0(\lambda)=1,~\bar{p}_1(\lambda)=\lambda-1$. Moreover, all the roots of $\bar{p}_i,~i=1,2,\ldots,n$, lie in the right half-plane.
If $\mathcal{T}_n=\mathcal{D}_n^{-1}\mathcal{A}_n$ is further assumed to be diagonalizable, then $\mathcal{T}_n$ satisfies the polynomial equation
$\prod^{n}_{i=0}\bar{p}_i =0$.
\end{theorem}

\begin{proof}
To verify the first part of the theorem, we let
\begin{equation*}\begin{array}{ll}
\mathcal{C}_{i}=S^{-1}_{i+1}C_i, \quad \mathcal{B}^T_{i}=S^{-1}_iB_i^T, \quad\text{for } \, i = 1, 2,\ldots ,n-1.
\end{array}
\end{equation*}
Because $A_i=0$ for $i=2,\ldots,n$, we see that $\mathcal{C}_i\mathcal{B}^T_{i} = I$ and the preconditioned system $\mathcal{T}_n$ degenerates to
  \begin{equation}\label{preconded_sys}
    \mathcal{T}_n=\mathcal{D}_n^{-1}\mathcal{A}_n=
          \left[\begin{array}{cccc}

    I & \mathcal{B}^T_1  &  &\\
    -\mathcal{C}_1 & 0  & \ddots &\\
       & \ddots& \ddots& (-1)^{n-2}\mathcal{B}^T_{n-1}\\
        & & (-1)^{n-1}\mathcal{C}_{n-1}   & 0
\end{array}\right].
\end{equation}
Let $\lambda$ be an eigenvalue and ${\bf x}=(x_1, x_2, ..., x_n)^T$ be the corresponding eigenvector, we have $\mathcal{T}_n \mathbf{x} = \lambda \, \mathbf{x}$. More clearly,
\begin{align}
x_1 +\mathcal{B}^T_{1}x_2 &= \lambda x_1, \nonumber \\
\mathcal{C}_1x_1 +\mathcal{B}^T_{2}x_3 &= -\lambda x_2, \nonumber \\
&\vdots           \label{eigdetails}   \\
\mathcal{C}_{n-2}x_{n-2} +\mathcal{B}^T_{n-1}x_{n} &= (-1)^{n-2}\lambda x_{n-1}, \nonumber \\
\mathcal{C}_{n-1}x_{n-1} &= (-1)^{n-1}\lambda x_{n}. \nonumber
\end{align}
From the first equation of (\ref{eigdetails}),
\begin{equation*}
  \mathcal{B}^T_{1} x_2 = \bar{p}_1(\lambda) \, x_1 ,
  \quad \text{where} \quad \bar{p}_1(\lambda) = \lambda - 1.
\end{equation*}
Setting $x_2=x_3= \ldots = x_n= 0$, and $x_1\neq 0$, we note that $\lambda_{11} =1$, the root of $\bar{p}_1(\lambda)$, is an eigenvalue by
. If $\lambda \neq \lambda_{11}$, from the second equation and replacing $x_1$ by $\frac{1}{\bar{p}_1(\lambda)} \mathcal{B}^T_{1} x_2$, we then obtain
\[
  \mathcal{B}^T_{2}x_3
    = -\mathcal{C}_1 x_1 - \lambda \, x_2
    = -\frac{1}{\bar{p}_1(\lambda)} \,  \mathcal{C}_1 \mathcal{B}_1^T x_2 - \lambda \, x_2
    = -\frac{1}{\bar{p}_1(\lambda)} \,  x_2 - \lambda \, x_2
    = -R_2(\lambda) \, x_2,
\]
where
\[
  R_2(\lambda) = \lambda +\frac{1}{\bar{p}_1(\lambda)} = \frac{\bar{p}_2(\lambda)}{\bar{p}_1(\lambda)}
  \quad   \text{with} \quad \bar{p}_2(\lambda) = \lambda  \bar{p}_1(\lambda) +1=\lambda \bar{p}_1(\lambda) +\bar{p}_0(\lambda)  .
\]
By setting $x_3 = \ldots = x_n = 0$, we see that the two roots of the polynomial $\bar{p}_2(\lambda)$, denoted as
$\lambda_{21}$ and $\lambda_{22}$, are eigenvalues. Repeating this procedure leads to
\begin{align*}
  \mathcal{B}^T_{j}x_{j+1} = (-1)^{j-1}R_j(\lambda) \,  x_j,
  \text{ for } j=2,\ldots,n-1,
  \text{ and }
  0 = (-1)^{n-1}R_n(\lambda) \, x_n
  \text{ with } R_j(\lambda) = \frac{\bar{p}_j(\lambda)}{\bar{p}_{j-1}(\lambda)},
\end{align*}
where the polynomials $\bar{p}_j(\lambda)$ are recursively given by
$$
  \bar{p}_0(\lambda) = 1,\quad
  \bar{p}_1(\lambda) = \lambda - 1, \quad
  \bar{p}_{i+1}(\lambda) = \lambda \, \bar{p}_i(\lambda) +\bar{p}_{i-1}(\lambda)
  \quad \text{for} \, i \ge 1.
$$ 
Therefore, the eigenvalues of $\mathcal{T}_n$ are the roots of the polynomials $\bar{p}_1\left(\lambda \right),\bar{p}_2(\lambda),\ldots,\bar{p}_{n}\left(\lambda \right)$.

For proving the second part of the conclusion, we note that all eigenvalues of $\mathcal{D}^{-1}_n \mathcal{A}_n$ are the roots of $\bar{p}_i,~i=1,2,\ldots,n$.
We are going to show that all roots of the polynomials $\bar{p}_i, i=1,2,\ldots,n$, lie on the right half-plane by using the Routh-Hurwitz stability criterion \cite{barnett1977routh, gantmacher1959}.

The stability of a polynomial can be determined by the Routh-Hurwitz stability criterion which suggests using
a tabular method \cite{barnett1977routh}: For a general $k$th-degree polynomial
$$
p_k(\lambda)=a_k \lambda^k+a_{k-1} \lambda^{k-1}+\cdots+a_1 \lambda+a_0,
$$
its Routh table is a matrix of $k+1$ rows
, which has the following structure:
\begin{equation}\label{RH_stability}
R_{T_k}=
\begin{bmatrix}
   &r_{01}    & r_{02}   & r_{03}   & \cdots   \\[1mm]
 &r_{11}    & r_{12}   & r_{13}   & \cdots     \\[1mm]
  &r_{21}    & r_{22}   & r_{23}   & \cdots    \\[1mm]
  &\vdots    & \vdots   & \vdots   & \ddots
\end{bmatrix}
\end{equation}
In (\ref{RH_stability}), 
the first two rows, which come from the coefficients directly, are ordered as
\begin{equation}\label{routh_entries}
(r_{0j}) = (a_k, a_{k-2}, a_{k-4}, \cdots ), \quad (r_{1j}) = (a_{k-1}, a_{k-3}, a_{k-5}, \cdots ).
\end{equation}
Other entries are computed as follows:
$$
\displaystyle
r_{ij} = \frac{-1}{r_{i-1, 1}} \det
\begin{bmatrix}
r_{i-2, 1}    & r_{i-2, j+1} \\
r_{i-1, 1}    & r_{i-1, j+1}
\end{bmatrix}, \quad i=2, 3, 4, \cdots.
$$
When the table is completed, the number of sign changes in the first column is the number of roots in the right half-plane \cite{barnett1977routh}.
Note that the entries of the Routh table are uniquely determined
once the first two rows are fixed, i.e., the Routh table is uniquely dependent on the coefficients of $p_k$.

For the polynomial $\bar{p}_k$ generated by (\ref{recurr_form}), its key coefficients (for the leading terms and the constant) are
\begin{equation}\label{coef}
\bar{a}_k=1, \quad \bar{a}_{k-1}=-1,\quad \quad \bar{a}_0=(-1)^{k}.
\end{equation}
Such a conclusion can be proved by the method of mathematical induction by using the recurrence formula (\ref{recurr_form}). 
From (\ref{coef}), in general, we have.
$$
\bar{p}_k=\lambda^{k}-\lambda^{k-1}+\bar{a}_{k-2}\lambda^{k-2}+\bar{a}_{k-3}\lambda^{k-3}+\bar{a}_{k-4}\lambda^{k-4}\ldots+\bar{a}_{2}\lambda^{2}+\bar{a}_1\lambda+(-1)^{k}.
$$
For completing the Routh table of $\bar{p}_k$, let us write out the expression of $\bar{p}_{k-1}$ in terms of the coefficients of $\bar{p}_k$.
From the recurrence formula (\ref{recurr_form}), we note that the expression of $\bar{p}_{k-1}$ will have small differences for the case $k$ odd and the case $k$ even:
\begin{equation}\label{guilv}
\begin{array}{l}
\displaystyle \bar{p}_{k-1}=\lambda^{k-1}-\lambda^{k-2}-\bar{a}_{k-3}\lambda^{k-3}-(\bar{a}_{k-4}+\bar{a}_{k-5})\lambda^{k-4}-\bar{a}_{k-5}\lambda^{k-5}-(\bar{a}_{k-6}+\bar{a}_{k-7})\lambda^{k-6}\\
\displaystyle \qquad\quad\ldots-(\bar{a}_5+\bar{a}_4)\lambda^5-\bar{a}_4\lambda^4-(\bar{a}_3+\bar{a}_2)\lambda^3-\bar{a}_2\lambda^2-(\bar{a}_1+\bar{a}_0)\lambda-\bar{a}_0, \qquad\mbox{if $k$ is odd},\\[2mm]
\displaystyle \bar{p}_{k-1}=\lambda^{k-1}-\lambda^{k-2}-\bar{a}_{k-3}\lambda^{k-3}-(\bar{a}_{k-4}+\bar{a}_{k-5})\lambda^{k-4}-\bar{a}_{k-5}\lambda^{k-5}-(\bar{a}_{k-6}+\bar{a}_{k-7})\lambda^{k-6}\\
\displaystyle\qquad\quad\ldots -\bar{a}_5\lambda^5-(\bar{a}_4+\bar{a}_3)\lambda^4-\bar{a}_3\lambda^3-(\bar{a}_2+\bar{a}_1)\lambda^2-\bar{a}_1\lambda-1,\qquad\qquad\quad~~ \mbox{if $k$ is even}.
\end{array}
\end{equation}

For ease of presentation, let us assume that $k$ is even (then $k-1$ and $k+1$ are odd. The case $k$ is odd can be proved similarly by slightly changing the notations in the following arguments). We denote the matrix $R_{T_k}$ as
the Routh table of polynomial $\bar{p}_{k}$. The first two rows
of $R_{T_{k}}$ are
\begin{equation}\label{RTn_tworows}
\begin{bmatrix}
   & \bar{a}_k=1        & \bar{a}_{k-2}  &\bar{a}_{k-4} &\ldots   &\bar{a}_{2} & \bar{a}_0=1   \\
   & \bar{a}_{k-1}=-1   & \bar{a}_{k-3}  &\bar{a}_{k-5} &\ldots   &\bar{a}_1   &0             \\
\end{bmatrix};
\end{equation}
Then, the first two rows of $R_{T_{k-1}}$ are
\begin{equation}\label{RTnm1_tworows}
\begin{bmatrix}
       & 1    & -\bar{a}_{k-3}           &-\bar{a}_{k-5}          &\ldots   &-\bar{a}_{1}  \\
     & -1   &-(\bar{a}_{k-4}+\bar{a}_{k-5})  &-(\bar{a}_{k-6}+\bar{a}_{k-7}) &\ldots    &-1      \\
\end{bmatrix};
\end{equation}
the first two rows of $R_{T_{k+1}}$ are
\begin{equation}\label{RTn1_tworows}
\begin{bmatrix}
     & 1    & \bar{a}_{k-2}-\bar{a}_{k-1}  &\bar{a}_{k-4}-\bar{a}_{k-3}  &\ldots  &\bar{a}_{2}-\bar{a}_{3} &\bar{a}_{0}-\bar{a}_{1} \\
         &-\bar{a}_{k}=-1   & -\bar{a}_{k-2}         &-\bar{a}_{k-4}   &\ldots  &-\bar{a}_{2}        &-\bar{a}_0 \\
\end{bmatrix},
\end{equation}
and $R_{T_{k+1}}$ satisfy the following relationship.
\begin{equation}\label{routh_law}
R_{T_{k+1}}=
\begin{bmatrix}
      & 1    & \bar{a}_{k-2}-\bar{a}_{k-1} &\bar{a}_{k-4}-\bar{a}_{k-3}  &\ldots  &\bar{a}_{2}-\bar{a}_{3} &\bar{a}_{0}-\bar{a}_{1} \\
      \\
        &   &&-R_{T_{k}}  \\
       \\

\end{bmatrix}.
\end{equation}

One only needs to verify that the derivation from (\ref{RTn_tworows}) to (\ref{RTn1_tworows})
is correct. To check this, by using (\ref{recurr_form}), there holds
$$
\begin{array}{lll}
&\bar{p}_{k+1}=\lambda \bar{p}_{k}+ \bar{p}_{k-1}   \\
&= \lambda (\lambda^{k}-\lambda^{k-1}+\bar{a}_{k-2}\lambda^{k-2}+\bar{a}_{k-3}\lambda^{k-3}+\bar{a}_{k-4}\lambda^{k-4}\ldots+\bar{a}_{2}\lambda^{2}+\bar{a}_1\lambda+\bar{a}_0)  \\
&+\lambda^{k-1}-\lambda^{k-2}-\bar{a}_{k-3}\lambda^{k-3}-(\bar{a}_{k-4}+\bar{a}_{k-5})\lambda^{k-4}-\bar{a}_{k-5}\lambda^{k-5}-(\bar{a}_{k-6}+\bar{a}_{k-7})\lambda^{k-6}\ldots \\
& -(\bar{a}_5+\bar{a}_4)\lambda^5-\bar{a}_4\lambda^4-(\bar{a}_3+\bar{a}_2)\lambda^3-\bar{a}_2\lambda^2-(\bar{a}_1+\bar{a}_0)\lambda-\bar{a}_0  \\
&=\lambda^{k+1}-\lambda^{k}+(\bar{a}_{k-2}+1)\lambda^{k-1}+(\bar{a}_{k-3}-1)\lambda^{k-2}+(\bar{a}_{k-4}-\bar{a}_{k-3})\lambda^{k-3}-\bar{a}_{k-4}\lambda^{k-4} \\
&  \quad+(\bar{a}_{k-6}-\bar{a}_{k-5})\lambda^{k-5} -\bar{a}_{k-6}\lambda^{k-6}\ldots+(\bar{a}_4-\bar{a}_3)\lambda^2-\bar{a}_3\lambda^3+(\bar{a}_1-\bar{a}_2)\lambda^2-\bar{a}_1\lambda+1.
\end{array}
$$
As $k$ is even, $k+1$ is odd, the expression of $\bar{p}_{k+1}$ satisfies the first equation in (\ref{guilv}), and the first two rows of the Routh table of $\bar{p}_{k+1}$ are
$$
\begin{bmatrix}
     & 1    & \bar{a}_{k-2}-\bar{a}_{k-1}  &\bar{a}_{k-4}-\bar{a}_{k-3}  &\ldots  &\bar{a}_{2}-\bar{a}_{3}   &\bar{a}_{0}-\bar{a}_{1} \\
         &-\bar{a}_{k}=-1   & -\bar{a}_{k-2}         &-\bar{a}_{k-4}   &\ldots  &-\bar{a}_{2}        &-\bar{a}_0. \\
\end{bmatrix}.
$$
Thus, (\ref{RTn1_tworows}) is verified. As the entries of the Routh table are uniquely determined by the first two rows, the rest of the verifications are straightforward: for verifying (\ref{routh_law}), one can directly check the entries row-by-row by
using (\ref{routh_entries}); Actually, the entries of the $3$-rd row of $R_{T_{k+1}}$ are
$$
r_{21} =\frac{-1}{-1} \det
\begin{bmatrix}
1     & \bar{a}_{k-2} -\bar{a}_{k-1}  \\
-1    & -\bar{a}_{k-2}
\end{bmatrix} = -\bar{a}_{k-1}=1,   \quad
r_{22} =\frac{-1}{-1} \det
\begin{bmatrix}
1     & \bar{a}_{k-4} -\bar{a}_{k-3}  \\
-1    & -\bar{a}_{k-4}
\end{bmatrix} = -\bar{a}_{k-3},...,
$$
$$
r_{2j} =\frac{-1}{-1} \det
\begin{bmatrix}
1     & \bar{a}_{k-2j} -\bar{a}_{k-2j+1}  \\
-1    & -\bar{a}_{k-2j}
\end{bmatrix} = -\bar{a}_{k-2j+1}, \quad \mbox{for a general $j$}.
$$
By comparing these entries with the $2$nd row of (\ref{RTn_tworows}), one sees that (\ref{routh_law}) is verified.

We conclude that the entries of the first row of $R_{T_k}$ change values from $1$ to $-1$ alternatingly. Therefore,
the number of sign changes in the first column is $k$.
Thus, according to the Routh-Hurwitz stability criterion \cite{barnett1977routh, gantmacher1959} and the fact that there is no zero root,
the number of roots of $\bar{p}_k$ which are lying in the right half-plane equals $k$.
Therefore, all the roots of $\bar{p}_i,~i=1,2,\ldots,n$, lie in the right half-plane.

If $\mathcal{T}_n=\mathcal{D}_n^{-1}\mathcal{A}_n$ is further assumed to be diagonalizable, then there exists an invertible matrix, say $\mathcal{Q}_n$, such that
$\mathcal{Q}_n^{-1} \mathcal{T}_n \mathcal{Q}_n$ is a diagonal matrix with the diagonal entries being the eigenvalues. By a direct verification, $\mathcal{T}_n$ satisfies
the polynomial equation $\prod^{n}_{i=0}\bar{p}_i =0$.
\end{proof}

{\bf Remark 3.} We highlight here that in the assumption of Theorem 3.2, we do not require that $\mathcal{A}_n$ is symmetric.
Moreover, under the same assumption of Theorem 3.2, if $\mathcal{S}_n$ in (\ref{Pre_Sn}) is the preconditioner, then the eigenvalues of $\mathcal{S}_n^{-1} \mathcal{A}_n$ are the roots of the polynomial of the form
$\prod^{n}_{i=0}\tilde{p}_i$, where $\tilde{p}_0(\lambda)=1,~\tilde{p}_1(\lambda)=\lambda-1$ and
$$
\tilde{p}_i=\lambda \tilde{p}_{i-1}-\tilde{p}_{i-2}, ~i=2,3,4,\ldots,n.
$$
The proof of this conclusion is the same as that provided in \cite{Sogn2019Schur}, which is for the symmetric case.
However, $\mathcal{S}_n$ is not a positively stable preconditioner. In comparison, we show that $\mathcal{D}_n$ is a
positively stable preconditioner for $\mathcal{A}_n$ no matter whether it is symmetric or not.

{\bf Remark 4.} We comment here that if $\mathcal{A}_n$ is symmetric, the proof for verifying $\mathcal{D}_n$ is positively stable can
be carried out similarly to that for Theorem 3.1. More clearly, we left multiply by $\mathcal{S}_n^{\frac{1}{2}}$ and right multiply by $\mathcal{S}_n^{-\frac{1}{2}}$ to $\mathcal{D}_n^{-1}\mathcal{A}_n$, and denote
$\mathcal{H}_n =\mathcal{S}_n^{\frac{1}{2}} \mathcal{D}_n^{-1} \mathcal{A}_n  \mathcal{S}_n^{-\frac{1}{2}}$. Then,
we have
\begin{align}
\mathcal{H}_n &=
        \left[\begin{array}{cccc}
      I &    S_1^{-\frac{1}{2}}  B_1^T S_2^{-\frac{1}{2}}  &   & \\
      -S_2^{-\frac{1}{2}} B_1 S_1^{-\frac{1}{2}} &  0  & \ddots &  \\
          &\ddots & \ddots & (-1)^{n-2}  S_{n-1}^{-\frac{1}{2}}B_{n-1}^T S_n^{-\frac{1}{2}} \\[1ex]
          &       & (-1)^{n-1} S_n^{-\frac{1}{2}} B_{n-1} S_{n-1}^{-\frac{1}{2}} & 0
  \end{array}\right]. \nonumber
\end{align}
We note that the Hermitian part of $\mathcal{S}_n^{\frac{1}{2}} \mathcal{D}_n^{-1} \mathcal{A}_n  \mathcal{S}_n^{-\frac{1}{2}}$ is
\begin{align}
\frac{1}{2} \left[ (\mathcal{S}_n^{\frac{1}{2}} \mathcal{D}_n^{-1} \mathcal{A}_n  \mathcal{S}_n^{-\frac{1}{2}})+(\mathcal{S}_n^{\frac{1}{2}} \mathcal{D}_n^{-1} \mathcal{A}_n  \mathcal{S}_n^{-\frac{1}{2}})^T \right]  &=
       \left[\begin{array}{cccc}
      I &     0 &   & \\
       0 &  0  & \ddots &  \\
         &\ddots & \ddots  & 0 \\[1ex]
         &       & 0  & 0
  \end{array}\right]. \nonumber
\end{align}
Let us denote $\lambda$ as an eigenvalue of $\mathcal{H}_n$, $\bar{\lambda}$ as its conjugate,
 and ${\bf x}$ as the corresponding eigenvector. As $\mathcal{H}_n$ is invertible, $\lambda$ is nonzero and $\mathbf{x}$ is not a zero vector.
 We have
 $$
 \mathbf{x}^T \mathcal{H}_n \mathbf{x} = \lambda ||\mathbf{x}||^2,   \quad  \mathbf{x}^T \mathcal{H}_n^T \mathbf{x} = \bar{\lambda}||\mathbf{x}||^2.
 $$
Summing the two equations together and noting from the expression of $\mathcal{H}_n$, we have
\begin{align}
\mathbf{x}^T \left(\mathcal{H}_n+\mathcal{H}_n^T\right) \mathbf{x} & = 2 \mbox{Re} (\lambda) ||\mathbf{x}||^2,  \nonumber \\
2 ||x_1||^2  &= 2 \mbox{Re} (\lambda) ||\mathbf{x}||^2.   \label{realpart}
\end{align}
Here, $x_1$ is the first component of $\mathbf{x}$ and $\mbox{Re} (\lambda)$ is the real part of $\lambda$. We assert that $x_1$ is nonzero.
This conclusion can be proved by the method of contradiction. If $x_1$ is $0$, starting with the first equation of (\ref{eigdetails}), we have $x_2 \in \mbox{Ker}(\mathcal{B}_1^T)$.
However, $\mathcal{B}_1^T$ must be of full column rank because $\mathcal{B}_1 \mathcal{B}^T_1 = I$. It follows that $x_2$ is 0.
Repeating this procedure as presented in (\ref{eigdetails}), and noting that $\mathcal{B}_i^T$ is of full column rank as $\mathcal{B}_i \mathcal{B}^T_i = I$, we will have $x_3=...=x_n=0$. It contradicts the fact that $\mathbf{x}$ is nonzero. Therefore, from (\ref{realpart}), we conclude that $\mbox{Re} (\lambda)>0$.

\section{Additive Schur complement based preconditioners for twofold saddle point problems}\label{sec:AddSchur}

In this section, we discuss how to design preconditioners when the twofold saddle point system is permuted into (\ref{permuted_form1}).
Such a form of the linear system arises naturally from domain decomposition methods \cite{Toselli2006domain} with two subdomains.
Without confusion, we introduce simplified notation
\begin{equation}\label{permuted_sys}
\begin{array}{ll}
A=\left[\begin{array}{cc}
A_{1}    & 0 \\[1mm]
0   &A_3      \\[1mm]
\end{array}\right],\qquad
B^T=\left[\begin{array}{ccc}
 B^T_{1} \\[1mm]
 C_{2} \\[1mm]
\end{array}\right],\qquad
C=\left[\begin{array}{ccc}
C_{1}            &B^T_{2}
\end{array}\right].
\end{array}
\end{equation}
Here, $A_3$ can degenerate to $0$. If $A_1$ and $A_3$ are invertible, then $A$ is invertible, and
the Schur complement simply reads as
\begin{equation*}
S=A_{2}+CA^{-1}B^T.
\end{equation*}
Furthermore, if $S$ is assumed to be invertible, then the following results hold \cite{fischer1998minimum, ipsen2001note}.
\begin{proposition}
If $A$ and $S$ are invertible and the preconditioner is
$$
\mathcal{Q}_{1}=\left[\begin{array}{cc}
A    & 0 \\[1mm]
C   &-S      \\[1mm]
\end{array}\right],
$$
then
\begin{equation*}
\mathcal{Q}_{1}^{-1}\mathcal{A}=
\left[\begin{array}{cc}
I   & A^{-1}B^T \\[1mm]
0   &I      \\[1mm]
\end{array}\right],
\end{equation*}
which satisfies the polynomial equation $(\lambda -1)^2=0$.
Moreover, if $A_{2}= 0$ and the preconditioner is
$$\mathcal{Q}_{D_1}=\left[\begin{array}{cc}
A    & 0 \\[1mm]
0   &S      \\[1mm]
\end{array}\right],$$
then
$$
\mathcal{T}_{Q_{D_1}}=\mathcal{Q}_{D_1}^{-1}\mathcal{A}=
\left[\begin{array}{cc}
I    & A^{-1}B^T \\[1mm]
S^{-1}C   &0      \\[1mm]
\end{array}\right],
$$
which satisfies the polynomial equation $(\lambda(\lambda-1))^2=\lambda(\lambda-1)$.
The non-zero eigenvalues of $\mathcal{T}_{Q_{D_1}}$ are
$$
1,~~ \frac{1\pm\sqrt{5}}{2}.
$$
\end{proposition}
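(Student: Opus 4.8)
The plan is to treat the two preconditioners separately, in both cases exploiting the $2$-by-$2$ block grouping already set up in (\ref{permuted_sys}), so that $\mathcal{A} = \bigl[\begin{smallmatrix} A & B^T \\ C & -A_{2}\end{smallmatrix}\bigr]$ with $S = A_{2}+CA^{-1}B^T$. Both identities for $\mathcal{Q}_1^{-1}\mathcal{A}$ and $\mathcal{T}_{Q_{D_1}}$ are immediate from direct block multiplication, so the real content lies in the two polynomial equations and the eigenvalue list.

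For $\mathcal{Q}_1$, I would write the block $LDU$ factorization $\mathcal{A}=\bigl[\begin{smallmatrix} I & 0 \\ CA^{-1} & I\end{smallmatrix}\bigr]\bigl[\begin{smallmatrix} A & 0 \\ 0 & -S\end{smallmatrix}\bigr]\bigl[\begin{smallmatrix} I & A^{-1}B^T \\ 0 & I\end{smallmatrix}\bigr]$, exactly analogous to (\ref{LDUdecomp}) but for the $2$-by-$2$ grouping. Multiplying the first two factors reproduces precisely $\mathcal{Q}_1$, so $\mathcal{Q}_1^{-1}\mathcal{A}$ equals the unit upper-triangular factor $\bigl[\begin{smallmatrix} I & A^{-1}B^T \\ 0 & I\end{smallmatrix}\bigr]$. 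Since this matrix minus the identity is strictly upper-triangular with a zero square, it is nilpotent of index $2$, giving $(\mathcal{Q}_1^{-1}\mathcal{A}-I)^2=0$; this is the claimed polynomial equation (the ``$=1$'' in the statement is presumably a typo for ``$=0$'').

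The substantive part is $\mathcal{Q}_{D_1}$. After the computation $\mathcal{T}_{Q_{D_1}}=\bigl[\begin{smallmatrix} I & A^{-1}B^T \\ S^{-1}C & 0\end{smallmatrix}\bigr]$, the key point is that $A_{2}=0$ collapses the Schur complement to $S=CA^{-1}B^T$. Abbreviating $P=A^{-1}B^T$ and $Q=S^{-1}C$, this yields the crucial identity $QP=S^{-1}CA^{-1}B^T=S^{-1}S=I$. I would then compute $\mathcal{T}_{Q_{D_1}}^2-\mathcal{T}_{Q_{D_1}}=\bigl[\begin{smallmatrix} PQ & 0 \\ 0 & I\end{smallmatrix}\bigr]$ and observe that $PQ$ is idempotent, since $(PQ)^2=P(QP)Q=PQ$. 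Hence $\mathcal{T}_{Q_{D_1}}^2-\mathcal{T}_{Q_{D_1}}$ is itself idempotent, which is exactly the asserted relation $(\lambda(\lambda-1))^2=\lambda(\lambda-1)$. The eigenvalue list then follows by factoring the associated polynomial $\lambda(\lambda-1)(\lambda^2-\lambda-1)=0$ and discarding the root $\lambda=0$, leaving $1$ together with the roots $\tfrac{1\pm\sqrt{5}}{2}$ of $\lambda^2-\lambda-1$.

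I expect the only genuine obstacle to be recognizing that $QP=I$ and that this one identity forces $PQ$ to be idempotent. Once $\mathcal{T}_{Q_{D_1}}^2-\mathcal{T}_{Q_{D_1}}$ is seen to be block-diagonal with an idempotent block and an identity block, the polynomial relation is immediate and no term-by-term matrix grinding (as was needed for $\mathcal{T}_1$ in Theorem \ref{Theo_diag}) is required.
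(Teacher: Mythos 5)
Your proposal is correct, and it is essentially the argument the paper relies on: the paper gives no in-text proof but defers to Murphy--Golub--Wathen and Ipsen, whose reasoning is exactly your block $LDU$ factorization for the triangular case and the identity $S^{-1}CA^{-1}B^T=I$ (when $A_2=0$) forcing $\mathcal{T}_{Q_{D_1}}^2-\mathcal{T}_{Q_{D_1}}$ to be idempotent for the diagonal case. You are also right that the stated ``$(\lambda-1)^2=1$'' should read ``$(\lambda-1)^2=0$''.
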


The proof of this proposition can be found in \cite{murphy2000note} and \cite{ipsen2001note}.

Based on the formulation (\ref{permuted_sys}), if the preconditioner is
$$
\mathcal{Q}_{2}=\left[\begin{array}{cc}
A   & 0 \\[1mm]
C   &S      \\[1mm]
\end{array}\right],
$$
then
\begin{equation*}
\mathcal{Q}_{2}^{-1}\mathcal{A}=
\left[\begin{array}{cc}
I   & A^{-1}B^T \\[1mm]
0   &-I      \\[1mm]
\end{array}\right],
\end{equation*}
which satisfies the polynomial equation $(\lambda -1)(\lambda +1) =0$.
Moreover, if $A_{2}= 0$ and the preconditioner is
$$\mathcal{Q}_{D_2}=\left[\begin{array}{cc}
A    & 0 \\[1mm]
0   &-S      \\[1mm]
\end{array}\right],$$
then
\begin{equation*}
\mathcal{T}_{Q_{D_2}}=\mathcal{Q}_{D_2}^{-1}\mathcal{A}=
\left[\begin{array}{cc}
I    & A^{-1}B^T \\[1mm]
-S^{-1}C   &0      \\[1mm]
\end{array}\right],
\end{equation*}
which satisfies the polynomial equation$(\lambda(\lambda-1))^2=-\lambda(\lambda-1)$.
The non-zero roots are
$$
1,~~ \frac{1\pm\sqrt{3}i}{2}.
$$

We note that the polynomial equations for $\mathcal{Q}_{i}$ and $\mathcal{Q}_{D_i}$ have a lower degree than $\mathcal{P}_{i}$ and $\mathcal{P}_{D_i}$ discussed in Section 2. From a purely algebraic point of view, if the GMRES method is applied, $\mathcal{Q}_{1}$ is more favored than $\mathcal{P}_{1}$, $\mathcal{Q}_{D_2}$ is more favored than $\mathcal{P}_{D_i}, i=1, 2, 3, 4$. However, on the other hand, it might be not easy to directly derive a proper approximation for the additive Schur complement for some specific problems \cite{mathew2008domain, Toselli2006domain}. For the nested Schur complement based preconditioners, they actually provide a recursive approach for solving block-tridiagonal systems.

\section{Additive Schur complement based preconditioners from the viewpoint of domain decomposition}
In this section, we generalize the discussion of Section 4 to the $n$-tuple case. We consider a system
   \begin{equation}
    \label{eq:A_N_ddm}
    \mathcal{A}_n =
    \left[\begin{array}{cccc}
      A_1 &0    &\ldots   &B_1^T \\
      0   &-A_2  &\ddots &\vdots  \\
    \vdots  &\ddots  &\ddots & B_{n-1}^T \\[1ex]
      C_1   &\ldots    & C_{n-1}   & (-1)^{n-1}A_n
  \end{array}\right].
  \end{equation}
Formally, the system matrix (\ref{eq:A_N_ddm}) arises naturally from the domain decomposition method for an elliptic
problem with $n-1$ subdomains. Like in Section 4, one can define $A$, $C$, $B^T$, and $D$.
The additive Schur complement preconditioner and the corresponding theory in Section 4 can still be applied. The resulting preconditioned system satisfies a polynomial of degree $2$ (block triangular preconditioner) or $4$ (block-diagonal preconditioner).

For some special forms of (\ref{eq:A_N_ddm}), for example, 3-by-3 block systems, it is possible to reorder \cite{Saad2003iterative, mathew2008domain} the
interior and interface unknowns subdomain by subdomain in a sequential manner and reduce them to the
block tridiagonal systems discussed in Section 3. If an additive Schur complement based preconditioner is applied,
then the degree of the polynomial equation that the preconditioned system satisfies is 2 (block triangular preconditioners)
or 4 (block-diagonal preconditioners). See \cite{murphy2000note, ipsen2001note} for the proof. In contrast, the preconditioners based on the nested Schur complement satisfy polynomials with degrees may be as high as $n$ (block triangular)
 or $n!$ (block-diagonal). Therefore, an additive Schur complement based preconditioner is more favored than a
nested Schur complement based preconditioner. The Schur complement of (\ref{eq:A_N_ddm}), which corresponds to the Steklov-Poincar\'{e} operator \cite{mathew2008domain, Toselli2006domain}, is an additive type. Algebraically inverting the Schur complement directly is typically challenging, as there is no reduction in operational costs. Instead, various preconditioning techniques such as Dirichlet-Neumann, (balancing) Neumann-Neumann, fractional Sobolev norm-based preconditioners, vertex space preconditioners, and others can be developed, particularly for elliptic problems. For more detailed information and discussions, interested readers are directed to \cite{mathew2008domain, Toselli2006domain}. For some specific PDE problems, for example, the Biot model \cite{lee2017parameter, oyarzua2016locking} or models from fluid
dynamics \cite{cai2009preconditioning, cai2015analysis, gatica2012twofold}, it is possible to derive an analytic approximation of
the additive type Schur complement from a Fourier analysis point of view \cite{cai2014efficient, cai2015analysis}. In this work, for a special 3-field reformulation of the Biot model, based on its block tridiagonal form, we design nested Schur complement based preconditioners. In our experiments, the Schur complements are firstly approximated by using a Fourier analysis derivation, then are further approximated by using incomplete Cholesky factorizations. The
detailed numerical experiments are provided in Section 6.

If $n>3$, we realize that not all block tridiagonal systems can be directly permuted into the domain decomposition
form (\ref{eq:A_N_ddm}), although the number of nonzero blocks of (\ref{eq:A_N_ddm}) and that of (\ref{eq:ADefNN})
are the same for the non-degenerate cases. Therefore, it is also important to investigate the nested Schur complement based preconditioners.

\section{Numerical experiments based on a 3-field formulation of the Biot model}
A 3-field formulation of the Biot model reads as \cite{lee2017parameter, oyarzua2016locking}:
	\begin{align}
		-2\mu \mbox{div}\left(\epsilonb(\bm{u})\right)+\nabla\xi=\bm{f},  \label{lee1}\\
		- \div \ub-\frac{1}{\lambda}\xi+\frac{\alpha}{\lambda} p=0,   \label{lee2}\\
		 \left(\left(c_0+\frac{\alpha^2}{\lambda}\right)p-\frac{\alpha}{\lambda}\xi\right)_t-K\div\left(\nabla p-\rho_f \bm{g}\right)=Q_s. \label{lee3}
	\end{align}
Here, $\bm{u}$ is the displacement, $p$ is the fluid pressure, and $\xi= -\alpha p + \lambda \mbox{div} \bm{u}$ is the total pressure with $\alpha$ being Biot constant, $K$ is
the permeability, $\bm{f}$ is the body force, $Q_s$ is the source term.
$$
\epsilonb({\bm{u}}):=\frac{1}{2}\left(\nabla \bm{u}+\nabla \bm{u}^T\right),
$$
and
\begin{equation}\label{Poisson_exp}
		\lambda = \frac{E \nu}
		{(1+\nu)(1-2\nu)},
		\quad  \mu=  \frac{E}
		{2(1+\nu)}.
	\end{equation}
The boundary conditions are
	\begin{align}
		\ub={\bm 0}     \quad  \mbox{on} ~\Gamma_d, \label{BC1}\\
		\sigma(\ub)\nb-\alpha p\nb=\hb \quad  \mbox{on} ~\Gamma_t, \label{BC2}\\
		p=0\quad  \mbox{on} ~\Gamma_p, \label{BC3} \\
		K\left(\nabla p-\rho_f\gb\right)\cdot\nb=g_2 \quad \mbox{on} ~\Gamma_f.   \label{BC4}
	\end{align}
The measures of the Dirichlet boundary $\Gamma_d$ and $\Gamma_p$ are assumed to be greater than $0$. For the wellposedness of the Biot problem with the above boundary
conditions, we refer the readers to \cite{lee2017parameter, oyarzua2016locking}.
Without loss of generality, the Dirichlet conditions in (\ref{BC1})-(\ref{BC4}) are assumed to be homogeneous. The initial conditions are:
	\begin{align} \label{ini}
		\bm u(0) = \bm u_{0} \quad \mbox{and} \quad p(0) = p_{0}.
	\end{align}

We apply the Taylor-Hood element pair for the discretization of $\bm{u}$ and $\xi$, and $P_1$ elements for $p$. The corresponding finite element spaces are denoted as $\bm{V}^h$, $Q^h$, and $W^h$. We apply a backward Euler scheme with a time stepsize $\Delta t$ for (\ref{lee3}). After the time and spatial discretizations, the resulting linear system is of the following form.
	\begin{equation}\label{couple_matrix}
		A x=
		\left[\begin{array}{ccc}
			A_{\bm{u}}     & B^{T}_{\bm{u} \xi} & 0 \\
			B_{\bm{u} \xi} &-A_{\xi}     & B^{T}_{\xi p} \\
			0             &B_{\xi p}     & A_p \\
		\end{array}\right]
		\left[\begin{array}{c}
			\bar{\bm{u}}    \\
			\bar{{\xi}} \\
			\bar{p}   \\
		\end{array}\right]
		=b.
	\end{equation}
Here, $\bar{\bm{u}}$, $\bar{{\xi}}$, and
			$\bar{p}$ are unknowns associated with $\bm{u}$, $\xi$, and $p$ respectively; $A_{\bm{u}}, B_{{\bm u}\xi},  A_{\xi}, B_{\xi p}$, and $A_{p}$ are the matrices resulted from the following
bilinear forms.
	\begin{align}
		a_{\bm{u}}(\bm{u}, \bm{v})=2\mu\left(\epsilonb(\ub),\epsilonb({\vb})\right), \quad b_{\bm{u}\xi}(\bm{u}, \phi)=-\left(\phi, {\rm div}\ub\right), \nonumber \\
		a_{\xi}(\xi, \phi)=\frac{1}{\lambda}(\xi, \phi),  \quad b_{\xi p}(\xi, \psi)= \frac{\alpha}{\lambda} \left(\xi, \psi\right), \nonumber \\
		a_{p}(p, \psi)=-\left( \left(c_0+\frac{\alpha^2}{\lambda}\right)p, \psi \right) - \Delta t K \left(\nabla p,\nabla\psi\right). \nonumber
	\end{align}
We comment here that $A_p$ is negative definite according to the weak form it corresponds to. Nevertheless, the presentation
in this section is consistent with the notations in the previous sections.

Applying a Fourier analysis derivation \cite{cai2014efficient, cai2015analysis, cai2022overlapping}: firstly, we note that the differential operator for solving (\ref{lee1})-(\ref{ini}) is
	$$
	\mathcal{A}=
	\left[\begin{array}{cccc}
		-2\mu\partial^2_x-\mu\partial^2_y   &-\mu\partial_y\partial_x           &\partial_x &0\\
		-\mu\partial_y\partial_x            &-\mu\partial^2_x-2\mu\partial^2_y  &\partial_y &0\\
		-\partial_x                         &-\partial_y                         &-1/\lambda  & \alpha/\lambda \\
		0                                   & 0                                 &\alpha/\lambda  &-(c_0+\frac{\alpha^2}{\lambda})+K \Delta t\Delta
	\end{array}\right].
	$$
The Fourier mode of $\hat{\mathcal{A}}$ is
	$$
	\hat{\mathcal{A}}  = \left[\begin{array}{cccc}
		2\mu k^2+\mu l^2    &\mu kl       &ik  &0\\
		\mu kl          & 2\mu l^2+\mu k^2 &il &0 \\
		-ik    & -il             & -\frac{1}{\lambda}  &\alpha/\lambda \\
		0  & 0 & \alpha/\lambda  &-(c_0+\frac{\alpha^2}{\lambda})-K \Delta t(k^2+l^2)
	\end{array}
	\right].
	$$
	Thus, the Fourier mode for $S_{\xi}=A_{\xi}+B_{\bm{u} \xi} A_{\bm{u}}^{-1} B_{\bm{u} \xi}^T$ is
	\begin{equation*}
		\begin{array}{rl}
			\displaystyle
			\hat{\mathbb{S}}_{\xi}&\displaystyle=\frac1\lambda+ [-ik ~ -il] \left[\begin{array}{cc}
				2\mu k^2+\mu l^2    &\mu kl  \\
				\mu kl          & 2\mu l^2+\mu k^2 \\
			\end{array}
			\right]^{-1}
			\left[\begin{array}{c}
				ik     \\
				il   \\
			\end{array}
			\right] 
			=\frac{1}{\lambda}+\frac{1}{2\mu}.
		\end{array}
	\end{equation*}
	Then, the Fourier mode for $S_{p}=A_p + B_{\xi p} S_{\xi}^{-1} B_{\xi p}^{T} $ is
	\begin{equation*}
		\displaystyle
		\begin{array}{rl}
			\hat{\mathbb{S}}_p &= -\left(c_0+\frac{\alpha^2}{\lambda} \right)- K \Delta t(k^2+l^2)+\frac{\alpha}{\lambda} \left(\frac{1}{\lambda}+\frac{1}{2 \mu} \right)^{-1}\frac{\alpha}{\lambda} \\
			&=  -(c_0+\frac{\alpha^2}{\lambda})-K\Delta t(k^2+l^2) + \frac{2\mu\alpha^2}{\lambda(\lambda + 2\mu)} \\
			&=  -(c_0+\frac{\alpha^2}{2\mu+\lambda})-K\Delta t(k^2+l^2).
		\end{array}
	\end{equation*}
	From the above Fourier analysis, it is easy to see that $\hat{\mathbb{S}}_{\xi}$ and $\hat{\mathbb{S}}_p$ correspond to the following matrices.
$$
\mathbb{S}_{\xi} =\left(\frac{1}{\lambda}+\frac{1}{2\mu} \right) M_{\xi} ~\quad \mbox{and} \quad~ \mathbb{S}_p=A_p+\frac{2\mu\alpha^2}{\lambda(\lambda + 2\mu)}M_p.
$$
Here, $M_{\xi}$ and $M_p$ are mass matrices that correspond to the identity operators in $Q^h$ and $W^h$, respectively.
Therefore, one can take the above two as the approximations of $S_{\xi}$ and $S_{p}$, respectively. In the implementation, one only needs to generate the
mass matrices in $Q^h$ and $W^h$ and put the proper scalings as shown above. In alignment with the formulations for $\mathcal{P}_{T_1}$, $\mathcal{P}_{T_2}$, $\mathcal{P}_{T_3}$, and $\mathcal{P}_{T_4}$ presented in equations (2.1) and (2.3), the corresponding preconditioners are represented as $\bar{\mathcal{P}}_{T_1}$, $\bar{\mathcal{P}}_{T_2}$, $\bar{\mathcal{P}}_{T_3}$, and $\bar{\mathcal{P}}_{T_4}$, respectively. This representation is achieved by substituting the exact Schur complements with the aforementioned approximations.
Likewise, for the counterparts $\mathcal{P}_{D_1}$, $\mathcal{P}_{D_2}$, $\mathcal{P}_{D_3}$, and $\mathcal{P}_{D_4}$ in equations (2.4) and (2.7), the resulting preconditioners are denoted as $\bar{\mathcal{P}}_{D_1}$, $\bar{\mathcal{P}}_{D_2}$, $\bar{\mathcal{P}}_{D_3}$, and $\bar{\mathcal{P}}_{D_4}$, respectively.

	\begin{table}[!htbp]
		\caption{Comparisons of the different preconditioners.
			GMRES iteration counts (and CPU time in seconds). The drop tolerance for the incomplete Cholesky factorizations is set to be $1.0 E -3$.}
		\centering\scriptsize
		\begin{tabular}{ccccc||cccccccc}
			\hline
			&\multicolumn{4}{c||}{Block Diagonal Precconditioners} &\multicolumn{4}{c}{Block Triangular Precconditioners}\\ \hline
			\multirow{1}*{$h$}
			& \multicolumn{1}{c}{$\widehat{\mathcal{P}}_{D_1}^{-1}$}&\multicolumn{1}{c}{~$\widehat{\mathcal{P}}_{D_2}^{-1}$}
			 &\multicolumn{1}{c}{\color{blue}$\widehat{\mathcal{P}}_{D_3}^{-1}$}&\multicolumn{1}{c||}{~$\widehat{\mathcal{P}}_{D_4}^{-1}$}
			 &\multicolumn{1}{c}{\color{blue}$\widehat{\mathcal{P}}_{T_1}^{-1}$}&\multicolumn{1}{c}{~$\widehat{\mathcal{P}}_{T_2}^{-1}$}
			 &\multicolumn{1}{c}{$\widehat{\mathcal{P}}_{T_3}^{-1}$}&\multicolumn{1}{c}{~$\widehat{\mathcal{P}}_{T_4}^{-1}$}\\
    		\hline
			$\frac{1}{16}$   &55 (0.0135)  &56 (0.0129)   &{\color{blue}50 (0.0122)}  &67 (0.0172)       &{\color{blue} 27 (0.0069)} &49 (0.0118)  &49 (0.0116)  &36 (0.0092)  \\
			$\frac{1}{32}$   &108 (0.2671)  &108 (0.2605) &{\color{blue}96 (0.2213)}  &127 (0.3559)      &{\color{blue}55 (0.1004)}  &97 (0.2237)  &97 (0.2187) &72 (0.1350)\\
			$\frac{1}{64}$   &229 (2.645)  &229 (2.586)  &{\color{blue}204 (2.170)} &273 (3.538)     &{\color{blue}118 (0.936)} &206 (2.225)  &206 (2.196) &157 (1.432)\\
			$\frac{1}{128}$  &470 (28.94)  &470 (29.03)  &{\color{blue}449 (28.03)} &593 (62.61)       &{\color{blue}261 (11.00)} &434 (42.44) &434 (42.83) &345 (28.44)\\
			$\frac{1}{256} $ &1057 (1027)  &1067 (1029) &{\color{blue}934 (957.9)} &1767 (1765) &{\color{blue}569 (382.1)} &947 (980.6) &947 (977.5) &782 (685.1) \\		
        \hline
	\end{tabular}
        \label{table1}
	\end{table}

	\begin{table}[!htbp]
		\caption{Comparisons of the different preconditioners.
			GMRES iteration counts (and CPU time in seconds). The drop tolerance for the incomplete Cholesky factorizations is set to be $1.0 E -4$.}
        \centering\scriptsize
		\begin{tabular}{ccccc||cccccccc}
			\hline
			&\multicolumn{4}{c||}{Block Diagonal Precconditioners}&\multicolumn{4}{c}{Block Triangular Precconditioners}\\ \hline
			\multirow{1}*{$h$}
			& \multicolumn{1}{c}{$\widehat{\mathcal{P}}_{D_1}^{-1}$}&\multicolumn{1}{c}{~$\widehat{\mathcal{P}}_{D_2}^{-1}$}
			 &\multicolumn{1}{c}{\color{blue}$\widehat{\mathcal{P}}_{D_3}^{-1}$}&\multicolumn{1}{c||}{~$\widehat{\mathcal{P}}_{D_4}^{-1}$}
			 &\multicolumn{1}{c}{\color{blue}$\widehat{\mathcal{P}}_{T_1}^{-1}$}&\multicolumn{1}{c}{~$\widehat{\mathcal{P}}_{T_2}^{-1}$}
			 &\multicolumn{1}{c}{$\widehat{\mathcal{P}}_{T_3}^{-1}$}&\multicolumn{1}{c}{~$\widehat{\mathcal{P}}_{T_4}^{-1}$}\\
			\hline
		$\frac{1}{16} $ &36 (0.0127)  &36 (0.0114)  &{\color{blue}32 (0.0118)}  &41 (0.0139)        &{\color{blue}15 (0.0073)}  &31 (0.0111)   &31 (0.0110)    &19 (0.0085) \\
			$\frac{1}{32} $  &51 (0.3073)  &51 (0.2624)  &{\color{blue}45 (0.2210)} &57 (0.3436)         &{\color{blue}24 (0.0961)}  &45 (0.2188) &45 (0.2415)  &29 (0.1259) \\
			$\frac{1}{64}$   &92 (0.988)  &92 (0.947)  &{\color{blue}80 (0.810)} &97 (1.007)         &{\color{blue}45 (0.414)}  &84 (0.854) &84 (0.841)  &54 (0.502) \\
			$\frac{1}{128}$ &187 (8.88) &188 (8.96)  &{\color{blue}167 (7.83)} &205 (10.54)    &{\color{blue}97 (4.31)}  &170 (7.81) &170 (7.88)  &119 (5.05) \\
			$\frac{1}{256}$   &373 (89.18) &373 (86.59)  &{\color{blue}353 (82.71)} &455  (119.4) &{\color{blue}206 (39.28)} &350 (80.22)  &350 (79.07)  &267 (54.99) \\
        \hline
		\end{tabular}
     \label{table2}
	\end{table}

For practical implementation, for each of $A_{\bm{u}}$, $\mathbb{S}_{\xi}$ and $-\mathbb{S}_p$ (because $\mathbb{S}_p$ is negative definite), we apply an incomplete Cholesky factorization, i.e.,
$$
\widehat{A}= L L^T + R
$$
with a drop tolerance. Then $\widehat{A}^{-1}$ is approximated by $L^{-T} L^{-1}$. We comment here that other types of inexact solvers can also be applied, for example, the methods introduced in \cite{cao2021cell, cao2022additive, neytcheva2011element}.
We denote the corresponding inexact approximations of $\bar{\mathcal{P}}_{D_1}$ to $\bar{\mathcal{P}}_{D_4}$ as $\widehat{\mathcal{P}}_{D_1}$ to $\widehat{\mathcal{P}}_{D_4}$. Similarly, we denote the corresponding inexact approximations of $\bar{\mathcal{P}}_{T_1}$ to $\bar{\mathcal{P}}_{T_4}$ as $\widehat{\mathcal{P}}_{T_1}$ to $\widehat{\mathcal{P}}_{T_4}$.
The implementation code, numerical results, and eigenvalues calculations are available at \url{https://github.com/cmchao2005/Preconditioners-for-Biot-model}.

In our numerical experiments, $\nu=0.499$, and $\lambda$ and $\mu$ are computed by using (\ref{Poisson_exp}). All the other physical parameters are set to be $1$. The computational domain is a unit square. The Dirichlet type boundaries $\Gamma_d$ and $\Gamma_p$ are the two vertical lines. Other parts of the boundary are Neumann-type.
In Table \ref{table1}, we report the GMRES iteration counts for all preconditioners with the drop tolerance for the incomplete factorizations being $10^{-3}$.
In Table \ref{table2}, we summarize the GMRES iteration counts for all preconditioners with the drop tolerance for the incomplete factorizations being $10^{-4}$. In all tests, the stopping criterion for the GMRES method is set to be that the $l^2$ norm of the relative residual is smaller than $1.0E-6$. From the numerical experiments, we see that the results based on $\widehat{\mathcal{P}}_{D_1}^{-1}$ are almost the same as those based on $\widehat{\mathcal{P}}_{D_2}^{-1}$,
the results based on $\widehat{\mathcal{P}}_{T_2}^{-1}$ are almost the same as those based on $\widehat{\mathcal{P}}_{T_3}^{-1}$. In comparison, $\widehat{\mathcal{P}}_{D_3}$ outperforms among
the block diagonal preconditioners, and $\widehat{\mathcal{P}}_{T_1}$ outperforms among the block triangular preconditioners. Note that we have theoretically proved that $\mathcal{P}_{D_3}$ and $\mathcal{P}_{T_1}$
are positively stable preconditioners. The numerical experiments clearly illustrate the advantages of the positively stable preconditioners.
Although we do not rigorously prove that $\bar{\mathcal{P}}_{D_3}$ and $\bar{\mathcal{P}}_{T_1}$ are positively stable preconditioners in this draft,
the corresponding numerical spectral verifications are provided in our code.

\section{Concluding remarks}\label{sec:Conclusion}

In this paper, both nested Schur complement and additive Schur complement based preconditioners are constructed for the twofold and block tridiagonal linear systems. The polynomial equations of the preconditioned matrices are analyzed. It is shown that by properly selecting the sign in front of each Schur complement, some preconditioners are positively stable. Numerical experiments based on the Biot model are provided to show that positively stable preconditioners outperform other preconditioners. More clearly, when inexact elliptic approximations are incorporated, the inexact versions of the positively stable preconditioners are more efficient.

For block tridiagonal systems, by comparing the theoretical analysis for the nested Schur complement based preconditioners and that for the additive type preconditioners, our argument is that permutation is important and necessary when designing preconditioners.
These results are instructive for devising the corresponding inexact versions of the preconditioners and iterative methods \cite{beik2018iterative}.

\section*{Acknowledgments} The authors would like to thank Mingjian Ding, and Baoxuan Zhu for providing an alternative proof of the Hurwitz stability of polynomials (\ref{recurr_form}). They also thank Jarle Sogn for communicating on Schur complement based preconditioners.
The work of M. Cai is partially supported by the NIH-RCMI grant through 347 U54MD013376, the affiliated project award from the Center for Equitable Artificial Intelligence and Machine Learning Systems (CEAMLS) at Morgan State University (project ID 02232301), and the National Science Foundation awards (1831950). The work of G. Ju is supported in part by the National Key R \& D Program of China (2017YFB1001604). The work of J. Li is partially supported by the National Natural Science Foundation of China No. 11971221 and the Shenzhen Sci-Tech Fund No. RCJC20200714114556020, JCYJ20170818153840322 and JCYJ20190809150413261, and Guangdong Provincial Key Laboratory of Computational Science and Material Design No. 2019B030301001.

\section*{Conflict of interest statement}
The authors declare that they have no conflict of interest.

\begin{appendices}
\section*{An alternative analysis of the Hurwitz stability of polynomials (\ref{recurr_form})}
\setcounter{section}{0}
\setcounter{equation}{0}
\setcounter{theorem}{0}

\renewcommand{\theequation}{A.\arabic{equation}}
\renewcommand{\thesection}{A.\arabic{section}}
\renewcommand{\thetheorem}{A.\arabic{theorem}}
\renewcommand{\thelemma}{A.\arabic{lemma}}

A multivariate  $f \in \mathbb{C}[z_1, \ldots, z_n]$ is said to be {\it weakly Hurwitz stable} if $f$ is either identically zero
or nonvanishing whenever $\Re(z_i) > 0$ for any $i \in [n]$, where
$\Re(z)$ is the real part of $z$ for $z \in \mathbb{C}$.
Denote by $\left\{r_i\right\}$ and $\left\{s_j\right\}$ the all real zeros
of $f, g \in \mathbb{R}\left[x \right]$, respectively.
We say that $g$ \emph{alternates left of} $f$ if $\deg(f)=\deg(g)=n$ and
\begin{equation}\label{1}
	s_{n} \leq r_{n} \leq \cdots \leq s_{2} \leq r_{2} \leq s_{1} \leq r_{1},
\end{equation}
and that $g$ \emph{interlaces} $f$ if $\deg(f)=\deg(g)+1=n$ and
\begin{equation}\label{2}
	r_{n} \leq s_{n-1} \leq \cdots \leq s_{2} \leq r_{2} \leq s_{1} \leq r_{1}.
\end{equation}
Let $g \preceq f$ denote either $g$ alternates left $f$ or $g$ interlaces
of $f$. Here, we denote $g \ll f$ if $g \preceq f$ (resp., $f \preceq g$)
and the leading coefficients of $f, g$ have the same (resp., opposite) sign.

Assume that a polynomial $f(x)=\sum_{k=0}^nf_kx^k$ has degree $n$,
then let
$$
f^E(x)=\sum_{k=0}^{\lfloor n/2 \rfloor}f_{2k}x^k
\quad \text{and} \quad
f^O(x)=\sum_{k=0}^{\lfloor (n-1)/2 \rfloor}f_{2k+1}x^k.
$$
\begin{theorem}\emph{(Hermite-Biehler Theorem)}\cite[Theorem 6.3.4]{RS02}\label{thm_HS}
A polynomial $f(x)$ with real coefficients is weakly Hurwitz stable if and only if $f^E(x)$ and $f^O(x)$
have only real and nonpositive zeros, and $f^O(x) \ll f^E(x)$.
\end{theorem}


\begin{theorem}
Let $p_n(x)$ satisfy the recurrence relation (\ref{recurr_form}) with
$p_0(x)=1$, $p_1(x)=x-1$.
Then, for any $n \geq 0$, all zeros of polynomial $p_n(x)$ lie in the right half-plane.
\end{theorem}

\begin{proof}
Obviously, all zeros of $p_n(x)$ lie in the right half-plane if and only if all zeros of $p_n(-x)$ lie in the left closed half-plane, i.e., weakly Hurwitz stable. Let
$$
f_n(x)=\sum_{k=0}^nf_kx^k:=(-1)^np_n(-x).
$$
Hence, $f_n(x)$ satisfies recurrence relation
\begin{equation}\label{rec+poly+f}
f_n(x)=xf_{n-1}(x)+f_{n-2}(x),
\end{equation}
where $f_0(x)=1$ and $f_1(x)=x+1$.
Note that $f_n(x)$ has only nonnegative coefficients for all $n \geq 0$.

\begin{table}[ht]\caption{The terms of $p_n(x)$ for $1 \leq n \leq 4$.}\label{table+poly+n}
\begin{center}
     \begin{tabular}{ccc}
 	\toprule[1.5pt]
 	$n$ & $p_n(x)$ \\
 	\midrule[0.5pt]
 	1 & $x-1$ \\
 	2 & $x^2-x+1$ \\
 	3 & $x^3-x^2+2x-1$ \\
 	4 & $x^4-x^3+3x^2-2x+1$\\
 	\bottomrule[1.5pt]
 \end{tabular}
 \end{center}
\end{table}

Let
$$
f_n^{E}(x)=\sum_{k=0}^{\lfloor n/2 \rfloor}f_{2k}x^k
\quad \text{and} \quad
f_n^{O}(x)=\sum_{k=0}^{\lfloor(n-1)/2\rfloor}f_{2k+1}x^k.
$$
Due to Theorem \ref{thm_HS}, it suffices to show that $f_n^{O}(x) \ll f_{n}^{E}(x)$ for any $n \geq 0$.
By the recurrence \eqref{rec+poly+f}, we get that
\begin{align}
	f_{n}^{E}(x) &= f_{n-2}^{E}(x)+xf_{n-1}^{O}(x), \label{rec+poly+f+even}\\
	f_{n}^{O}(x) &= f_{n-1}^{E}(x)+f_{n-2}^{O}(x). \label{rec+poly+f+odd}
\end{align}
By induction on $n$, it is easy to see that
\begin{equation}\label{rel+poly+f+parity}
f_{2n}^{E}(x)=f_{2n+1}^{E}(x) \quad \text{and} \quad f_{2n-1}^{O}(x)=f_{2n}^{O}(x).
\end{equation}

Subsequently, we will prove the following results through an induction argument based on $n$.
\begin{eqnarray*}
&&(1)\,f_{n-2}^{E}(x) \ll f_{n}^{E}(x),\quad \quad
(2)\,f_{n-2}^{O}(x) \ll f_{n}^{O}(x), \\
&&(3)\,f_{n-1}^{O}(x) \ll f_{n-2}^{E}(x),\quad \;
(4)\,f_{n-1}^{O}(x) \ll f_{n}^{E}(x).
\end{eqnarray*}
Firstly, $f_n^{O}(x) \ll f_n^{E}(x)$ follows from interlacing relations (3) and (4) by \eqref{rel+poly+f+parity}.
When $n=4$, by Table \eqref{table+poly+n} it is routine to verify that (1) -- (4) are true by Table \eqref{tab+f+parity}.
\begin{table}[ht]\caption{The terms of $f_n^{O}(x)$ and $f_n^{E}(x)$ for $2 \leq n \leq 4$.}\label{tab+f+parity}
\begin{center}
    \begin{tabular}{ccc}
		\toprule[1.5pt]
		$n$ & $f_n^{O}(x)$& $f_{n}^{E}(x)$ \\
		\midrule[0.5pt]
		2 & 1 & $x+1$ \\
		3 & $x+2$ & $x+1$ \\
		4 & $x+2$ & $x^2+3x+1$\\
		\bottomrule[1.5pt]
	\end{tabular}
 \end{center}
\end{table}
Assume that (1) -- (4) are true for an $n > 4$. We need to prove that
\begin{eqnarray*}
	&&(a)\,f_{n-1}^{E}(x) \ll f_{n+1}^{E}(x), \quad
	(b)\,f_{n-1}^{O}(x) \ll f_{n+1}^{O}(x), \\
	&&(c)\,f_{n}^{O}(x) \ll f_{n-1}^{E}(x),\quad \;\;\;
	(d)\,f_{n}^{O}(x) \ll f_{n+1}^{E}(x).
\end{eqnarray*}
By \eqref{rec+poly+f+odd} and the interlacing relations
$$
f_n^{O}(x) \ll  f_{n-1}^{E}(x)+f_{n-2}^{O}(x)
\quad \text{and} \quad
f_{n}^{O}(x) \ll -f_{n-2}^{O}(x),
$$
we have $f_{n}^{O}(x) \ll f_{n-1}^{E}(x)$, i.e., (c) holds.
Moreover, by \eqref{rec+poly+f+even} and combining $$f_{n-1}^E(x) \ll xf_{n}^O(x)
\quad \text{and} \quad
f_{n-1}^E(x) \ll f_{n-1}^E(x)
$$
we derive that $f_{n-1}^E(x) \ll f_{n+1}^E(x)$, i.e., (a) holds.
Note that $f_{n+1}^{O}(x) = f_{n}^{E}(x)+f_{n-1}^{O}(x)$,
hence (b) follows from (4) and $f_{n-1}^{O}(x) \ll f_{n-1}^{O}(x)$.
Similarly, due to $f_{n+1}^E(x) = f_{n-1}^E(x) + xf_{n}^O(x)$ by \eqref{rec+poly+f+even}, (d) is immediately derived by (c) and $f_{n}^O(x) \ll xf_n^O(x)$.
This completes the proof.
\end{proof}

\end{appendices}

%

\medskip
\medskip

\penalty-8000

\bibliographystyle{siam}


\end{document}